\documentclass[11pt]{article}
\usepackage{amsmath,amssymb,amsthm}
\usepackage[margin=1in]{geometry}
\usepackage{graphicx}
\usepackage{hyperref}
\usepackage{enumitem}
\usepackage{authblk}
\usepackage{float}
\usepackage{subfigure}
\numberwithin{equation}{section}
\newtheorem{theorem}{Theorem}[section]
\newtheorem{lemma}[theorem]{Lemma}
\newtheorem{assumption}[theorem]{Assumption}

\newtheorem{remark}[theorem]{Remark}

\newcommand{\norm}[1]{\left\| #1 \right\|}
\newcommand{\e}{\operatorname{e}}

\title{Order Reduction of Exponential Runge--Kutta Methods:\\
	Fourth-Order Schemes for Non-Commuting Operators}
	
\author[1]{Thi Tam Dang}
\author[2]{Pablo Alexei Gazca-Orozco}
\author[2]{Trung Hau Hoang\thanks{{
			Email address: Thi Tam Dang:
			tam.dang@helsinki.fi, Pablo Alexei Gazca-Orozco: gazca@karlin.mff.cuni.cz, Trung Hau Hoang: trung-hau.hoang@matfyz.cuni.cz
}}}
\affil[1]{Department of Mathematics and Statistics, University of Helsinki, Finland} 
\affil[2]{Faculty of Mathematics and Physics, Charles University, Czech Republic}

\date{}

\begin{document}
	
\maketitle

\begin{abstract}
This paper extends the convergence analysis of explicit exponential Runge--Kutta methods for linear parabolic problems $u'(t) + Au(t) = Bu(t)$---where $A$ generates an analytic semigroup and $B$ is relatively bounded with respect to $A$---from the third-order case to fourth-order schemes. By establishing the global error recursion relation and extending the defect-based analytical framework, we identify the terms responsible for stiff order reduction when $A$ and $B$ do not commute. Numerical experiments are performed on a non-commuting advection--diffusion problem to validate the theoretical results. Numerical tests using the classical four-stage, fourth-order schemes of Krogstad and Strehmel \& Weiner exhibit an observed convergence order of approximately 2.75, which matches the theoretical prediction from the convergence analysis. 
\end{abstract}

\noindent
Keywords: Exponential Runge--Kutta methods, Error analysis, Non-commuting operators, Order reduction.

\noindent
AMS subject classifications: 65M15, 65L05, 65L70

\section{Introduction}

Exponential integrators have become a popular class of numerical methods for the time integration of stiff differential equations, particularly those arising from the spatial discretization of partial differential equations \cite{HO2010, MinchevWright2005, Hau1, Hau2, Hau3}. The fundamental motivation behind exponential methods is to integrate the linear, stiff part of a system exactly using matrix exponentials and related $\varphi$-functions, thereby avoiding the severe step-size restrictions imposed by standard explicit Runge--Kutta schemes. Over the last two decades, the theoretical foundations of explicit exponential Runge--Kutta methods for semilinear parabolic problems have been firmly established \cite{HO2005, LuanOstermann2013, LuanOstermann2014}. 

Despite their popularity, a pervasive challenge in the application of high-order exponential integrators is the phenomenon of \emph{stiff order reduction}. Classical convergence theory relies on the assumption that the non-stiff nonlinear operator (or perturbation) behaves smoothly. However, when applied to practical initial-boundary value problems, order reduction frequently manifests due to spatial boundary incompatibilities, non-smooth initial data, or a lack of commutativity between spatial operators \cite{EinkemmerOstermann2014, HochbruckOstermann2005_parabolic, OstermannRoche1992}. In this work, we focus on establishing sharp, rigorous convergence bounds for problems governed by non-commuting, unbounded spatial operators. In particular, we are interested in the nonlinear problem of the form
\begin{equation}\label{eq98} 
	\begin{aligned}
		& \partial_{t}u(t,x) - \Delta u(t,x) = f(u(t,x), \nabla u(t,x)), \qquad (t,x) \in (0,T] \times \Omega, \\
		& u(0,x) = u_0(x), \qquad x \in \Omega,
	\end{aligned}  
\end{equation}
subject to suitable boundary conditions. Equations of this form arise in numerous applications across science and engineering (see, e.g.,~\cite{Ablowitz1979,ALLEN19791085,bailyn1994survey,bergman2011fundamentals,Bronsard1993,grindrod1996theory}). Here, $u(t,x)$ denotes the unknown function, $f$ is a nonlinear function of $u$ and its spatial gradient $\nabla u$, and $\Omega \subset \mathbb{R}^N$ is a bounded domain, where $N$ represents the spatial dimension. 

In a recent study~\cite{orig}, a rigorous error analysis was developed for explicit exponential Runge--Kutta methods up to classical order three applied to the linear evolutionary model problem with non-commuting operators,
\begin{equation}\label{eq:model}
	u'(t) + Au(t) = Bu(t), \qquad u(0) = u_0,
\end{equation}
posed on a Banach space $X$ (which represents a simplified form of~\eqref{eq98}). Here, $A: D(A) \to X$ is an unbounded sectorial operator generating an analytic semigroup $\{e^{-tA}\}_{t \ge 0}$, while $B: D(B) \to X$ is an unbounded operator that is relatively bounded with respect to $A$ (such as an advection or lower-order spatial operator) and crucially does \emph{not} commute with $A$. The analysis in \cite{orig} demonstrated a sharp dichotomy between low- and high-order methods:
\begin{itemize}
	\item First-order (exponential Euler) and second-order eERK schemes retain their full classical order of convergence under mild regularity assumptions on the initial datum $u_0$.
	\item Third-order explicit exponential Runge--Kutta schemes suffer a systematic order reduction-dropping down to order $5/2$ in general, and reaching at most order $11/4$ for a specially tuned choice of internal stage coefficients.
\end{itemize}

A natural question left open in \cite{orig} is how this order reduction behaves at the fourth order. Four-stage, fourth-order explicit exponential Runge--Kutta methods satisfy a larger set of stiff order conditions. Crucially, whereas third-order conditions involve only a few non-commuting terms between $B$ and the coefficient functions $\varphi_k(-\tau A)$, several fourth-order stiff conditions introduce many more. Understanding how these non-commutative structures interact with analytic semigroup estimates and propagate through the global error recursion is essential to determine whether fourth-order schemes suffer further accuracy loss or hit a structural ceiling. 

In this paper, we extend the error-recursion machinery of \cite{orig} to four-stage, fourth-order explicit exponential Runge--Kutta schemes. We derive the full error recursion and classify all defect terms. We bound all relevant defect terms rigorously and formulate the final error bound in Theorem~\ref{theorem:main}. We validate the theoretical convergence results numerically using Krogstad's and Strehmel and Weiner’s classical fourth-order schemes \cite{HO2005}. The observed numerical order of $2.75$ confirms the predicted order reduction of fourth-order schemes.

This paper is organized as follows. Section \ref{sec:Analyticalframework} reviews the analytic semigroup framework in a Banach space~$X$ required for analyzing problem~\eqref{eq:model}. Section \ref{sec:Fourth-order} outlines the fourth-order explicit exponential Runge--Kutta schemes under consideration. In Section \ref{sec:recursion-full}, we present our rigorous convergence analysis along with the primary theoretical result. Section \ref{sec:numerics} provides numerical simulations to validate the order reduction bounds, and Section \ref{sec:conclusion} offers concluding remarks.

\section{Analytical framework}\label{sec:Analyticalframework}

To establish a rigorous environment for the convergence analysis, we adopt the functional analytical setting introduced in \cite{orig}, which we briefly recall here for completeness. Let $X$ denote a Banach space. Throughout the paper, $\norm{\cdot}$ denotes the norm in $X$ or the induced operator norm in $\mathcal{L}(X)$, as appropriate.

\begin{assumption}\label{ass:sectorial}
	The linear operator $A: D(A) \subset X \to X$ is sectorial. That is, $A$ is closed and densely defined, and there exist constants $\varphi \in (0,\pi/2)$, $M \in [1,\infty)$, and $a \in \mathbb{R}$ such that the resolvent set contains the open sector 
	\[
	S_{a,\varphi} = \big\{\lambda \in \mathbb{C} \setminus \{a\} : \varphi \le |\arg(\lambda - a)| \le \pi\big\},
	\]
	and the resolvent estimate
	\begin{equation}\label{eq:resolvent}
		\norm{(\lambda I - A)^{-1}} \le \frac{M}{|\lambda - a|}
	\end{equation}
	holds for all $\lambda \in S_{a,\varphi}$.
\end{assumption}

Under Assumption~\ref{ass:sectorial}, the operator $-A$ generates an analytic semigroup $\bigl\{\mathrm{e}^{-t A}\bigr\}_{t \ge 0}$. For any $\omega\in(-a,\infty)$, the fractional powers of the shifted operator $\widetilde{A}=A+\omega I$ are well defined (cf.~\cite{henry1981geometric}). By shifting $A$ if necessary and subsequently denoting the shifted operator again by $A$, we may assume without loss of generality that the fractional powers of $A$ are well defined. In our error analysis, we will frequently rely on the following standard parabolic smoothing estimate (see, e.g., \cite{HOCHBRUCK2005323}).

\begin{lemma}\label{lem1}
	Under Assumption~\ref{ass:sectorial}, there exists a constant $C > 0$ such that
	\begin{equation}\label{parabolicsmoothing1}
		\bigl\|\mathrm{e}^{-t A}\bigr\| + \bigl\|t^\gamma A^\gamma \mathrm{e}^{-t A}\bigr\| \le C
	\end{equation}
	holds for all $\gamma \ge 0$ uniformly in $t \in [0,T]$.
\end{lemma}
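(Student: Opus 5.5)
The plan is to represent the semigroup through the Dunford--Cauchy integral over the boundary of the sector from Assumption~\ref{ass:sectorial}, and then estimate that integral directly, using the resolvent bound \eqref{eq:resolvent}. We work with the shifted operator, so we may take $a>0$. Then $0$ lies in the resolvent set, the fractional powers $A^\gamma$ are defined, and they commute with $\mathrm{e}^{-tA}$. Fix an angle $\varphi'\in(\varphi,\pi/2)$ and let $\Gamma$ be the path made of the two rays $\{a/2+r\mathrm{e}^{\pm i\varphi'}: r\ge 0\}$. The path is oriented so that the spectrum of $A$ lies to its right. For $t>0$ we have
\[
\mathrm{e}^{-tA}=\frac{1}{2\pi i}\int_\Gamma \mathrm{e}^{-t\lambda}(\lambda I-A)^{-1}\,d\lambda ,
\]
with the orientation chosen so that the sign is correct. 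For integer $\gamma=m$ we apply $A^m$ under the integral, or equivalently use $A^m\mathrm{e}^{-tA}=(-1)^m\frac{d^m}{dt^m}\mathrm{e}^{-tA}$, which gives the same formula with the extra factor $\lambda^m$. For $t>0$ we then rescale the contour by $\lambda=\mu/t$, suitably adjusted near the vertex, so that the integrand becomes $|\mu|^{m}\mathrm{e}^{-c|\mu|}/|\mu|$ times $t^{-m}$. The resolvent estimate gives $\|(\lambda-A)^{-1}\|\le M/|\lambda-a|\lesssim 1/|\lambda|$ on the deformed contour. Since $\mathrm{Re}\,\mu\ge c|\mu|$ along the rays, the integral converges, and we obtain $\|t^mA^m\mathrm{e}^{-tA}\|\le C_m$ for all $t\in(0,T]$. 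The case $m=0$ gives the bound on $\|\mathrm{e}^{-tA}\|$.

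For non-integer $\gamma$ we write $\gamma=m+\theta$ with $\theta\in(0,1)$. We then use the moment (interpolation) inequality for fractional powers of sectorial operators:
\[
\|A^\theta x\|\le C\|x\|^{1-\theta}\|Ax\|^{\theta}.
\]
We apply it to $x=A^m\mathrm{e}^{-tA}y$. The integer estimates give $\|x\|\le C t^{-m}\|y\|$ and $\|Ax\|\le Ct^{-m-1}\|y\|$. Combining these yields $\|A^\gamma\mathrm{e}^{-tA}\|\le Ct^{-\gamma}$. Alternatively, one can use the Balakrishnan integral representation of $A^{-(1-\theta)}$ and insert it into the same contour computation.

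The endpoint $t=0$ is covered by the conventions $t^\gamma A^\gamma\mathrm{e}^{-tA}=0$ for $\gamma>0$ and $=I$ for $\gamma=0$, which is how the uniformity statement should be read. Working on $[0,T]$ rather than $[0,\infty)$ absorbs any factor $\mathrm{e}^{\omega t}$ that the shift might produce. The constant depends on $\gamma$, which is fine: the statement is applied only for finitely many $\gamma$.

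The main obstacle is the contour deformation near the origin for small $t$, that is, rescaling while keeping the contour inside the resolvent set. We handle this by using the shift, which puts the spectrum in $\{\mathrm{Re}\,\lambda\ge a>0\}$. We then split $\Gamma$ into the part with $|\lambda|\le 1/t$, where $|\mathrm{e}^{-t\lambda}|\le C$ and $\int|\lambda|^{m-1}\,d|\lambda|\lesssim t^{-m}$, and the part with $|\lambda|>1/t$, where exponential decay gives the same bound.
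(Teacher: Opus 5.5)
The paper gives no proof of this lemma; it quotes it as a standard smoothing estimate. Your Dunford-integral-plus-moment-inequality argument is the textbook proof behind that citation. The integer cases $m\ge 1$ are fine as you present them. The case $m=0$, however, does not follow from what you wrote, and it matters twice. It is the first term $\|\mathrm{e}^{-tA}\|$ of the statement. It also supplies $\|x\|\le C\|y\|$ in your interpolation step for $0<\gamma<1$, which is the only range the paper actually uses.

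On your fixed contour $\Gamma$ one has $|\lambda|\ge a/2$. So the claim $\int_{\Gamma\cap\{|\lambda|\le 1/t\}}|\lambda|^{m-1}\,|d\lambda|\lesssim t^{-m}$ is false for $m=0$: the integral is of size $\log\bigl(2/(at)\bigr)$, and the bound $|\mathrm{e}^{-t\lambda}|\le C$ gives no compensation on that piece. Equivalently, after rescaling, the vertex of $t\Gamma$ sits at $ta/2\to 0$, and $\int_{t\Gamma}|\mu|^{-1}\mathrm{e}^{-c|\mu|}\,|d\mu|$ grows like $\log(1/t)$. Your argument therefore only yields $\|\mathrm{e}^{-tA}\|\le C(1+|\log t|)$. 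The moment inequality then gives $\|A^\theta\mathrm{e}^{-tA}\|\le Ct^{-\theta}(1+|\log t|)^{1-\theta}$ rather than $Ct^{-\theta}$.

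The missing idea is a $t$-dependent contour near the origin. That is presumably what your phrase about adjusting near the vertex means, but your final paragraph replaces it with a splitting that does not work. Use $\Gamma_t$, consisting of the rays $\{r\mathrm{e}^{\pm i\varphi'}: r\ge 1/t\}$ joined by the arc $\{t^{-1}\mathrm{e}^{i\psi}: \varphi'\le|\psi|\le\pi\}$ through the left half-plane. Because $a>0$, every $\lambda\neq 0$ with $|\arg\lambda|\ge\varphi$ lies in $S_{a,\varphi}$ and satisfies $|\lambda-a|\ge|\lambda|\sin\varphi$. Hence $\|(\lambda I-A)^{-1}\|\le C/|\lambda|$ on $\Gamma_t$. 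On the arc, $|\mathrm{e}^{-t\lambda}|\le\mathrm{e}$, the resolvent is $O(t)$, and the length is $O(1/t)$. The rays contribute $C\int_1^\infty s^{-1}\mathrm{e}^{-s\cos\varphi'}\,ds$. This gives $\|\mathrm{e}^{-tA}\|\le C$. Alternatively, simply invoke the bound $\|\mathrm{e}^{-tA}\|\le M_0\mathrm{e}^{\omega_0 t}$, valid for every $C_0$-semigroup.

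For $\gamma>0$ you could also avoid the issue by putting $\lambda^\gamma$ directly into the Dunford integral. There your splitting does work, since $\int_{a/2}^{1/t}s^{\gamma-1}\,ds\lesssim t^{-\gamma}$. With this repair the argument is complete. Your readings that $C$ must depend on $\gamma$ and that the estimate is meant for $t\in(0,T]$ are correct. A minor point: for the formula as written, the spectrum must lie to the left of $\Gamma$, not to the right; this does not affect any norm bound.
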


To account for the non-commuting operator $B$, we impose a relative boundedness condition defined via fractional power domains as follows.

\begin{assumption}\label{ass:B}
	The linear operator $B : D(B) \subset X \to X$ is closed, and there exists an exponent $\gamma \in (0, 1]$ such that $D(A^\gamma) \subset D(B)$. In particular, $B$ is relatively bounded with respect to $A^\gamma$, so that
	\begin{equation}\label{eq:B-rel-bound}
		\norm{BA^{-\gamma}} \le C \quad \text{and} \quad \norm{A^{-\gamma}B} \le C.
	\end{equation}
\end{assumption}

Under Assumptions~\ref{ass:sectorial} and~\ref{ass:B}, the perturbation theorem for sectorial operators guarantees that the perturbed operator $A - B$ is likewise sectorial \cite{pazy1983semigroups}. Therefore, the stability estimate \eqref{parabolicsmoothing1} extend directly to $A - B$. Throughout this paper, we assume that \(u\) possesses sufficient smoothness and that both \(u\) and its derivatives belong to the domain of \(B\).

Finally, following \cite{orig}, we employ the shorthand $M^- = M - \zeta$ and $M^+ = M + \tilde{\zeta}$ for arbitrarily small, fixed parameters $\zeta, \tilde{\zeta} > 0$. This notation convenienty absorbs any logarithmic factor losses that arise when estimating the defect recursions.
	
\section{Fourth-order exponential Runge--Kutta methods}\label{sec:Fourth-order}

When applied to the semi-discrete linear problem $u'(t) + Au(t) = Bu(t)$ with time step $\tau > 0$, an $s$-stage explicit exponential Runge--Kutta method computes internal stage approximations $U_{ni} \approx u(t_n + c_i\tau)$ and the updated solution $u_{n+1} \approx u(t_{n+1})$ via
\begin{subequations}\label{eq:scheme4}
	\begin{align}
		U_{ni} &= e^{-c_i \tau A}u_n + \tau\sum_{j=1}^{i-1} a_{ij}(-\tau A) B U_{nj}, \qquad 1 \le i \le s,\\
		u_{n+1} &= e^{-\tau A}u_n + \tau\sum_{i=1}^{s} b_i(-\tau A) B U_{ni},
	\end{align}
\end{subequations}
where $c_1 = 0$, $U_{n1} = u_n$, and the stage nodes satisfy $c_i \in (0,1]$ for $i \ge 2$. Here, we focus on four-stage ($s=4$), classical fourth-order schemes. The operator-valued coefficients $a_{ij}(-\tau A)$ and $b_i(-\tau A)$ are linear combinations of the matrix functions $\varphi_k(-\tau A)$ ($k \ge 0$), defined inductively by
\[
\varphi_0(z) = \operatorname{e}^z, \qquad \varphi_{k+1}(z) = \frac{\varphi_k(z) - \frac{1}{k!}}{z}, \quad (k \ge 0).
\]
Throughout the paper, we denote generic operator-valued coefficient combinations by $\phi(-\tau A)$. These coefficients satisfy the uniform bound
\begin{equation}\label{eq:phibound}
	\norm{\phi(-\tau A)} + \norm{\tau^{\eta}A^{\eta}\phi(-\tau A)} \le C, \qquad 0 \le \eta \le 1.
\end{equation}

To guarantee high-order convergence in the presence of stiffness, the coefficient functions must satisfy a set of stiff order conditions. Table~\ref{tab:order4} collects the algebraic conditions required for a four-stage method to achieve classical order four. In these expressions, $J$ and $K$ denote arbitrary bounded linear operators on $X$ 

\begin{table}[h!]

	\centering

	\begin{tabular}{cll}

		\hline

		No. & Order & Condition \\

		\hline

		1 & 1 & $\displaystyle\sum_{i=1}^{4} b_i(-\tau A) = \varphi_1(-\tau A)$ \\[4pt]

		2 & 2 & $\displaystyle\sum_{i=2}^{4} b_i(-\tau A)c_i = \varphi_2(-\tau A)$ \\[4pt]

		3 & 2 & $\displaystyle\sum_{j=1}^{i-1} a_{ij}(-\tau A) = c_i\varphi_1(-c_i\tau A)$ \\[4pt]

		4 & 3 & $\displaystyle\sum_{i=2}^{4} b_i(-\tau A)c_i^2 = 2\varphi_3(-\tau A)$ \\[4pt]

		5 & 3 & $\displaystyle\sum_{i=2}^{4} b_i(-\tau A)J\Big(\varphi_2(-c_i\tau A)c_i^2 - \sum_{j=2}^{i-1} a_{ij}(-\tau A)c_j\Big) = 0$ \\[4pt]

		6 & 4 & $\displaystyle\sum_{i=2}^{4} b_i(-\tau A)c_i^3 = 6\varphi_4(-\tau A)$ \\[4pt]

		7 & 4 & $\displaystyle\sum_{i=2}^{4} b_i(-\tau A)J\Big(\varphi_3(-c_i\tau A)c_i^3 - \tfrac{1}{2}\sum_{j=2}^{i-1} a_{ij}(-\tau A)c_j^2\Big) = 0$ \\[4pt]

		8 & 4 & $\displaystyle\sum_{i=2}^{4} b_i(-\tau A)J\sum_{j=2}^{i-1} a_{ij}(-\tau A)J\Big(\varphi_2(-c_j\tau A)c_j^2 - \sum_{k=2}^{j-1} a_{jk}(-\tau A)c_k\Big) = 0$ \\[4pt]

		9 & 4 & $\displaystyle\sum_{i=2}^{4} b_i(-\tau A)c_iK\Big(\varphi_2(-c_i\tau A)c_i^2 - \sum_{j=2}^{i-1} a_{ij}(-\tau A)c_j\Big) = 0$ \\

		\hline

	\end{tabular}

	\caption{Stiff order conditions for explicit four-stage exponential Runge--Kutta methods of order four.}

	\label{tab:order4}

\end{table} 
Here, we do not mandate that conditions~5, 7, 8, and~9 hold in their strong form. Instead, we enforce them in a \emph{very weak form}, evaluating \textbf{all} coefficient functions---including the outer weights $b_i(-\tau A)$, the internal stage weights $a_{ij}(-\tau A)$, and the functions $\varphi_k(-c_i\tau A)$---at $A = 0$.  We adopt the following foundational hypothesis generalizing the third-order formulation in \cite{orig}.

\begin{assumption}\label{ass:beta}
	There exists a parameter $\beta \in \mathbb{R}$ such that the operators $A^{\beta}BA^{-1}$ and $A^{-1}BA^{\beta}$ are bounded on $X$. 
\end{assumption}

\begin{remark}
Concrete examples of operators satisfying Assumptions \ref{ass:B}, \ref{ass:beta}, \ref{ass:sectorial}, will be provided after the statement of the main theorem, where the parameters introduced in these assumptions will be specified explicitly.
\end{remark}

\section{Error Recursion and Convergence}\label{sec:recursion-full}

We consider: $s=4$ (stages), $c_1=0$,
$U_{n1}=u_n$, coefficients $a_{ij}(-\tau A)$, $b_i(-\tau A)$ built from
$\varphi_k(-\tau A)$'s and the order conditions of Table~\ref{tab:order4}. 

%As in \cite{orig} we
%write $\Lambda(t)=t^{-1+\alpha+\zeta}$ or $t^{-1+\zeta}$ for the (interchangeable)
%weight coming from $u_0\in D(A^{2+\alpha})$, and we absorb arbitrarily small losses
%into $M_\pm$, $\zeta$.

\subsection{Taylor expansions: stages and update}

\paragraph{Stage expansion.} For $i=2,3,4$, applying the \emph{variation-of-constants} formula (see \cite{HO2010}) with $\theta=c_i$ and Taylor-expanding $Bu(t_n+\xi)$
in $\xi$ to order $2$ (remainder in $u'''$) gives the exact identity
\begin{equation}\label{eq:S}
	u(t_n+c_i\tau) = e^{-c_i\tau A}u(t_n) + \sum_{k=1}^{3}(c_i\tau)^k\varphi_k(-c_i\tau A)\,Bu^{(k-1)}(t_n) + \rho_i(t_n),
\end{equation}
\[
\rho_i(t_n) = \int_0^{c_i\tau} e^{-(c_i\tau-\xi)A}\int_0^\xi \frac{(\xi-\sigma)^2}{2}\,Bu'''(t_n+\sigma)\,d\sigma\,d\xi .
\]

\paragraph{Update expansion.} Expanding $Bu(t_n+\xi)$ to order $3$ (remainder in $u''''$) and using $\theta=1$:
\begin{equation}\label{eq:U}
	u(t_{n+1}) = e^{-\tau A}u(t_n) + \sum_{k=1}^{4}\tau^k\varphi_k(-\tau A)\,Bu^{(k-1)}(t_n) + \rho(t_n),
\end{equation}
\[
\rho(t_n) = \int_0^{\tau} e^{-(\tau-\xi)A}\int_0^\xi \frac{(\xi-\sigma)^3}{6}\,Bu''''(t_n+\sigma)\,d\sigma\,d\xi .
\]

\begin{remark}
	As in \cite{orig}, we distinguish two cases: $t_n = 0$ and $t_n \neq 0$.
	\begin{itemize}
		\item Case $t_n = 0$. 	Expanding $u(\xi)$ in a Taylor series about $0$ up to third order:
		\begin{equation}\label{errf32}
			u(\xi) = u(0) + \xi u'(0) + \frac{\xi^2}{2} u''(0) + \int_0^\xi \frac{(\xi - \sigma)^2}{2} u'''(\sigma) \, \mathrm{d}\sigma.
		\end{equation}
		\item Case $t_n \neq 0$. 	Expanding $u(t_n + \xi)$ in a Taylor series about $t_n$ up to fourth order:
		\begin{equation}\label{eq59}
			u(t_n + \xi) = u(t_n) + \xi u'(t_n) + \frac{\xi^2}{2} u''(t_n) + \frac{\xi^3}{6} u'''(t_n) + \int_0^\xi \frac{(\xi - \sigma)^3}{6} u^{(4)}(t_n + \sigma) \, \mathrm{d}\sigma.
		\end{equation}
		In particular, setting $\xi = c_j \tau$ for $j < i$ gives:
		\begin{equation}\label{eq:T}
			u(t_n + c_j\tau) = u(t_n) + c_j\tau u'(t_n) + \frac{(c_j\tau)^2}{2} u''(t_n) + \frac{(c_j\tau)^3}{6} u'''(t_n) + \int_0^{c_j\tau} \frac{(c_j\tau - \sigma)^3}{6} u^{(4)}(t_n + \sigma) \, \mathrm{d}\sigma.
		\end{equation}
	\end{itemize}
The expressions for the derivatives up to the fourth order are:
	\begin{equation}\label{eq16}
		u''(\sigma) = (A-B)^{1-\alpha} \mathrm{e}^{-\sigma(A-B)} (A-B)^{1+\alpha} u_0,
	\end{equation}
	\begin{equation}\label{eq16_3rd}
		u'''(t) = -(A-B)^{2-\alpha} \mathrm{e}^{-t(A-B)} (A-B)^{1+\alpha} u_0,
	\end{equation}
	\begin{equation}\label{eq16_4th}
		u^{(4)}(t) = (A-B)^{3-\alpha} \mathrm{e}^{-t(A-B)} (A-B)^{1+\alpha} u_0.
	\end{equation}
In the case $n = 1$ ($t_1 = 0$), the terms are evaluated using the variant \eqref{errf32} rather than \eqref{eq:T}.
\end{remark}

\subsection{Stage defects $\Delta_{ni}$}

Inserting the exact solution into stage $i$ of the scheme yields:
\begin{equation}\label{eq:6a}
	u(t_n+c_i\tau) = \mathrm{e}^{-c_i\tau A}u(t_n) + \tau\sum_{j=1}^{i-1}a_{ij}(-\tau A)\,Bu(t_n+c_j\tau) + \Delta_{ni}.
\end{equation}
  
To compute $\Delta_{ni}$, we substitute $u(t_n+c_j\tau)$ with $j = i$ from \eqref{eq:T} and subtract $u(t_n+c_i\tau)$ given by \eqref{eq:6a}. The $B u(t_n)$ terms cancel \emph{exactly} by order condition~3 ($\sum_{j<i}a_{ij}(-\tau A) = c_i\varphi_1(-c_i\tau A)$). Collecting the remaining terms yields:
\begin{equation}\label{eq:Delta-ni}
	\Delta_{ni} = \tau^2\,\mathcal J_{2,i}\,Bu'(t_n) \;+\;\tau^3\,\mathcal J_{3,i}\,Bu''(t_n) \;+\;\tau^4\,\mathcal J_{4,i}\,Bu'''(t_n) \;+\;\Xi_{ni},
\end{equation}
where the stage-level residual brackets are defined by:
\begin{align}
	\mathcal J_{2,i} &:= c_i^2 \varphi_2(-c_i\tau A) - \sum_{j=2}^{i-1}a_{ij}(-\tau A)\,c_j, \label{eq:J2}\\[4pt]
	\mathcal J_{3,i} &:= c_i^3 \varphi_3(-c_i\tau A) - \tfrac12\sum_{j=2}^{i-1}a_{ij}(-\tau A)\,c_j^2, \label{eq:J3}\\[4pt]
	\mathcal J_{4,i} &:= c_i^4 \varphi_4(-c_i\tau A) - \tfrac16\sum_{j=2}^{i-1}a_{ij}(-\tau A)\,c_j^3, \label{eq:J4}
\end{align}
and the localized integral remainder is now of order $\mathcal{O}(\tau^5)$:
\begin{equation}\label{eq:Xi}
	\Xi_{ni} = \rho_i(t_n) \;-\;\tau\sum_{j=1}^{i-1}a_{ij}(-\tau A)\,B\!\int_0^{c_j\tau}\frac{(c_j\tau-\sigma)^3}{3!}u^{(4)}(t_n+\sigma)\,\mathrm{d}\sigma.
\end{equation}

\subsection{Update defect $\delta_{n+1}$}

Inserting the \emph{exact} stage values into the update formula gives:
\begin{equation}\label{eq:6b}
	u(t_{n+1}) = \mathrm{e}^{-\tau A}u(t_n) + \tau\sum_{i=1}^{4}b_i(-\tau A)\,Bu(t_n+c_i\tau) + \delta_{n+1}.
\end{equation}
Although the order conditions can be satisfied in numerous ways, we specifically adopt the configuration where Conditions~1, 2, 4, and~6 hold in strong form, while Conditions~5, 7, 8, and~9 hold only in the \emph{very weak form} (at $A=0$). This choice is motivated by the fact that several well-known numerical schemes in the literature satisfy this precise structure, allowing us to validate the theoretical error bounds numerically in Section~\ref{sec:numerics}. Under this setting, substituting the variation-of-constants expansion into \eqref{eq:6b} and matching against the exact solution \eqref{eq:U} eliminates the terms involving $Bu(t_n)$, $Bu'(t_n)$, $Bu''(t_n)$, and $Bu'''(t_n)$, leaving the non-vanishing residuals as

%Substituting the variation-of-constants expansion for each $u(t_n+c_i\tau)$ into \eqref{eq:6b} and subtracting the exact solution $u(t_{n+1})$ \eqref{eq:U}, the terms $Bu(t_n)$, $Bu'(t_n)$, $Bu''(t_n)$, and $Bu'''(t_n)$ match the exact solution expansion whenever order conditions~1, 2, 4, and~6 hold in strong form. 
%
%However, since Conditions~5, 7, 8, and~9 hold only in the \emph{very weak form} (at $A=0$), the remaining non-zero residuals are given by
\begin{equation}\label{eq:delta}
	\begin{aligned}
		\delta_{n+1} &= \tau^3 \sum_{i=2}^4 b_i(-\tau A) J(\mathcal J_{2,i}) \, Bu'(t_n) 
		\;+\; \tau^4 \sum_{i=2}^4 b_i(-\tau A) J(\mathcal J_{3,i}) \, Bu''(t_n) \\
		& \qquad 
		\;+\; \tau^4 \sum_{i=2}^4 b_i(-\tau A) c_i K(\mathcal J_{2,i}) \, Bu'(t_n) 
		\;+\; \Xi_{n+1}.
	\end{aligned}
\end{equation}
The remaining Taylor-remainder term $\Xi_{n+1}$ is given by:
\begin{equation}\label{eq:Xi-update}
	\Xi_{n+1} = \rho(t_n) \;-\; \tau\sum_{i=1}^4 b_i(-\tau A) \rho_i(t_n).
\end{equation}

\subsection{Solving the recursion}

Let $e_n = u_n - u(t_n)$ denote the global error at step $n$, and let $E_{ni} = U_{ni} - u(t_n + c_i\tau)$ represent the internal stage error at stage $i$. To analyze the global error propagation, the stage errors $E_{ni}$ are substituted recursively into the global update formula for $e_{n+1}$. Recall the stage error recursion:
\begin{equation}\label{eq:E_ni_rec}
	E_{ni} = \mathrm{e}^{-c_i\tau A}e_n + \tau\sum_{j<i}a_{ij}(-\tau A)BE_{nj} - \Delta_{ni},
\end{equation}
and the global update error relation:
\begin{equation}\label{eq:e_n1_rec}
	e_{n+1} = \mathrm{e}^{-\tau A}e_n + \tau\sum_{i=1}^4 b_i(-\tau A)BE_{ni} - \delta_{n+1}.
\end{equation}

For an explicit four-stage method ($i=1,2,3,4$), writing out the internal stages step-by-step for $i=1,2,3,4$ gives
\begin{align*}
	E_{n1} &= e_n - \Delta_{n1}, \\[4pt]
	E_{n2} &= \mathrm{e}^{-c_2\tau A}e_n + \tau a_{21} B e_n - \tau a_{21} B \Delta_{n1} - \Delta_{n2}, \\[4pt]
	E_{n3} &= \mathrm{e}^{-c_3\tau A}e_n + \tau a_{31} B e_n + \tau a_{32} B \mathrm{e}^{-c_2\tau A}e_n + \tau^2 a_{32} B a_{21} B e_n \\
	&\quad - \tau a_{31} B \Delta_{n1} - \tau a_{32} B \Delta_{n2} - \tau^2 a_{32} B a_{21} B \Delta_{n1} - \Delta_{n3}, \\[4pt]
	E_{n4} &= \mathrm{e}^{-c_4\tau A}e_n + \tau a_{41} B e_n + \tau a_{42} B \mathrm{e}^{-c_2\tau A}e_n + \tau a_{43} B \mathrm{e}^{-c_3\tau A}e_n \\
	&\quad + \tau^2 a_{42} B a_{21} B e_n + \tau^2 a_{43} B a_{31} B e_n + \tau^2 a_{43} B a_{32} B \mathrm{e}^{-c_2\tau A}e_n \\
	&\quad + \tau^3 a_{43} B a_{32} B a_{21} B e_n \\
	&\quad - \tau \Big( a_{41} B \Delta_{n1} + a_{42} B \Delta_{n2} + a_{43} B \Delta_{n3} \Big) \\
	&\quad - \tau^2 \Big( a_{42} B a_{21} B \Delta_{n1} + a_{43} B a_{31} B \Delta_{n1} + a_{43} B a_{32} B \Delta_{n2} \Big) \\
	&\quad - \tau^3 a_{43} B a_{32} B a_{21} B \Delta_{n1} - \Delta_{n4}.
\end{align*}

Substituting these expressions into \eqref{eq:e_n1_rec} eliminates all stage variables $E_{ni}$, producing the explicit single-step error recursion:
\begin{equation}\label{eq:e_n-full}
	\begin{aligned}
			e_{n+1} &= \mathrm{e}^{-\tau A}e_n   + \tau \sum_{i=1}^4 b_i(-\tau A) B \mathrm{e}^{-c_i\tau A} e_n  \\
			&\quad + \tau^2 \sum_{i=2}^4 \sum_{j=1}^{i-1} b_i(-\tau A) B a_{ij}(-\tau A) B \mathrm{e}^{-c_j\tau A} e_n  \\
			&\quad + \tau^3 \sum_{i=3}^4 \sum_{j=2}^{i-1} \sum_{k=1}^{j-1} b_i(-\tau A) B a_{ij}(-\tau A) B a_{jk}(-\tau A) B \mathrm{e}^{-c_k\tau A} e_n  \\
			&\quad + \tau^4 b_4(-\tau A) B a_{43}(-\tau A) B a_{32}(-\tau A) B a_{21}(-\tau A) B e_n  \\
			&\quad - \tau \sum_{i=1}^4 b_i(-\tau A) B \Delta_{ni}  \\
			&\quad - \tau^2 \sum_{i=2}^4 \sum_{j=1}^{i-1} b_i(-\tau A) B a_{ij}(-\tau A) B \Delta_{nj}  \\
			&\quad - \tau^3 \sum_{i=3}^4 \sum_{j=2}^{i-1} \sum_{k=1}^{j-1} b_i(-\tau A) B a_{ij}(-\tau A) B a_{jk}(-\tau A) B \Delta_{nk}  \\
			&\quad - \delta_{n+1}.
	\end{aligned}
\end{equation}

Solving the error recursion \eqref{eq:e_n-full} and grouping terms by their structure yields the global error representation: 
\begin{equation} 
	e_n = \mathrm{I} + \mathrm{II} + \mathrm{III} + \mathrm{IV} + \mathrm{V} + \mathrm{VI} +  \mathrm{VII},
\end{equation}
where each group is explicitly given by:

\begin{itemize}[leftmargin=1.4em]
	\item $
		\mathrm{I} = \tau \sum_{j=0}^{n-1} \mathrm{e}^{-(n-j-1)\tau A}  \left( \sum_{i=1}^4 b_i(-\tau A) B \mathrm{e}^{-c_i\tau A} \right) e_j,
	$
	
	\item $
		\mathrm{II} = \tau^2 \sum_{j=0}^{n-1} \mathrm{e}^{-(n-j-1)\tau A} \left( \sum_{i=2}^4 \sum_{k=1}^{i-1} b_i(-\tau A) B a_{ik}(-\tau A) B \mathrm{e}^{-c_k\tau A} \right) e_j,
	$
	
	\item $
		\mathrm{III} = \tau^3 \sum_{j=0}^{n-1} \mathrm{e}^{-(n-j-1)\tau A} \left( \sum_{i=3}^4 \sum_{k=2}^{i-1} \sum_{m=1}^{k-1} b_i(-\tau A) B a_{ik}(-\tau A) B a_{km}(-\tau A) B \mathrm{e}^{-c_m\tau A} \right) e_j,
	$

	\item $
		\mathrm{IV} = \tau^4 \sum_{j=0}^{n-1} \mathrm{e}^{-(n-j-1)\tau A} \Big( b_4(-\tau A) B a_{43}(-\tau A) B a_{32}(-\tau A) B a_{21}(-\tau A) B \Big) e_j.
$

\item The quantities $T_n^{[1]}$, $T_n^{[2]}$, and $T_n^{[3]}$ are defined as
\begin{align}
	T_n^{[1]} &:= \sum_{i=2}^4 b_i(-\tau A)B \mathcal J_{2,i} \, B u'(t_{n-1}),
%	\qquad &&(\text{Condition 5}), \label{eq:T1}
	\\[4pt] 
	T_n^{[2]} &:= \sum_{i=2}^4 b_i(-\tau A) B \mathcal J_{3,i} \, B u''(t_{n-1}),
%	\qquad &&(\text{Condition 7}), \label{eq:T2}
	\\[4pt]
	T_n^{[3]} &:=  \sum_{i=2}^4 c_i b_i(-\tau A) B \mathcal J_{2,i} \, B u'(t_{n-1}).
%	\qquad &&(\text{Condition 9}). \label{eq:T3}
\end{align}
Error group V takes the form
\begin{equation}\label{eq:VI-def}
	\mathrm{V} = -\tau^3 \sum_{j=0}^{n-1} \mathrm{e}^{-j\tau A} T_{n-j}^{[1]}
	\;-\; \tau^4 \sum_{j=0}^{n-1} \mathrm{e}^{-j\tau A}
	\left( T_{n-j}^{[2]} + T_{n-j}^{[3]} \right).
\end{equation}

Here, $T_n^{[1]}$, $T_n^{[2]}$, and $T_n^{[3]}$ vanish whenever the
corresponding order condition (5, 7, and 9, respectively) holds in the
\emph{strong} form of Table~\ref{tab:order4}. If a condition holds only in
the weak or very weak sense, the corresponding $T_n^{[l]}$
is generally nonzero and must be estimated, as shown below.

\begin{remark}\label{rem:krogstad-VI}
	For Krogstad's scheme, as well as Strehmel and Weiner's scheme specifically, Conditions~5 and~9 hold in general, while Condition~7 holds only in a very weak form (with all arguments evaluated at $A=0$). Consequently,
	\[
	T_n^{[1]} = 0, \qquad T_n^{[3]} = 0,
	\]
	and \eqref{eq:VI-def} collapses to
	\begin{equation}\label{eq:VI-krogstad}
\mathrm{V}\big|_{\text{Krogstad / Strehmel--Weiner}} = -\tau^4 \sum_{j=0}^{n-1} \mathrm{e}^{-j\tau A}\, T_{n-j}^{[2]}.
	\end{equation}
	The bound derived below for $T_n^{[2]}$ is therefore the
	\emph{only} contribution from Group V that exists for Krogstad's scheme as well as Strehmel and Weiner's scheme;
	the estimates for $T_n^{[1]}$ and $T_n^{[3]}$ are retained in the general
	analysis for the sake of other four-stage schemes that satisfy Conditions~5
	and/or~9 only weakly.
\end{remark}

\item $
	\mathrm{VI} = -\sum_{j=0}^{n-1} \mathrm{e}^{-j\tau A} R_{n-j}.
$ 
Here, we have
\begin{equation}\label{eq:R1}
	R_n^{[k]} := \tau\, b_{k+1}(-\tau A) B \int_0^{c_{k+1}\tau}
	\mathrm{e}^{-(c_{k+1}\tau-\xi)A} \int_0^\xi \frac{(\xi-\sigma)^2}{2!}\,
	B u'''(t_{n-1}+\sigma) \, \mathrm d\sigma \, \mathrm d\xi,
	\,  k=1,2,3,
\end{equation}
%so that $R_n^{[1]}, R_n^{[2]}, R_n^{[3]}$ correspond respectively to
%$k+1=2,3,4$. The update-step remainder contributes the fourth, quartic-kernel
%term
\begin{equation}\label{eq:R5}
	R_n^{[4]} := \int_0^\tau \mathrm{e}^{-(\tau-\xi)A} \int_0^\xi
	\frac{(\xi-\sigma)^3}{3!}\, B u''''(t_{n-1}+\sigma) \, \mathrm d\sigma \, \mathrm d\xi,
\end{equation}
%alongside the three remaining $b_i(-\tau A)$-weighted step-type remainders
\begin{align}
R_n^{[5]} &:= -\tau\,b_2(-\tau A)\int_0^{c_2\tau} \frac{(c_2\tau-\sigma)^3}{3!}\,
B u''''(t_{n-1}+\sigma)\,\mathrm d\sigma, \qquad n\ge 2, \label{eq:R5} \\[4pt]
	R_n^{[6]} &:= -\tau\,b_3(-\tau A)\int_0^{c_3\tau} \frac{(c_3\tau-\sigma)^3}{3!}\,
	B u''''(t_{n-1}+\sigma)\,\mathrm d\sigma, \qquad n\ge 2, \label{eq:R6}\\[4pt]
	R_n^{[7]} &:= -\tau\,b_4(-\tau A)\int_0^{c_4\tau} \frac{(c_4\tau-\sigma)^3}{3!} \,
	B u''''(t_{n-1}+\sigma)\,\mathrm d\sigma, \qquad n\ge 2, \label{eq:R7} \\ 
	R_1^{[5]} &:= -\tau\,b_2(-\tau A)\int_0^{c_2\tau} \frac{(c_2\tau-\sigma)^2}{2!}\,
	B u'''(\sigma)\,\mathrm d\sigma, \label{eq:R5-boundary}\\
	R_1^{[6]} &:= -\tau\,b_3(-\tau A)\int_0^{c_3\tau} \frac{(c_3\tau-\sigma)^2}{2!}\,
	B u'''(\sigma)\,\mathrm d\sigma, \label{eq:R6-boundary}\\
	R_1^{[7]} &:= -\tau\,b_4(-\tau A)\int_0^{c_4\tau} \frac{(c_4\tau-\sigma)^2}{2!}\,
	B u'''(\sigma)\,\mathrm d\sigma. \label{eq:R7-boundary}
\end{align}

\item Define $W_n$ by 
\begin{equation}\label{eq:Wn-def}
	W_n := \sum_{i=3}^4 \sum_{j=2}^{i-1} b_i(-\tau A) \, B \, a_{ij}(-\tau A)
	\, B \, \mathcal J_{2,j} \, B u'(t_{n-1}).
\end{equation} 
Group VII takes the compact form:
\begin{equation}\label{eq:VIII-def}
	\mathrm{VII} = -\tau^4 \sum_{l=0}^{n-1} \mathrm{e}^{-l\tau A} W_{n-l}.
\end{equation}
\end{itemize}
	 
\subsection{Bounds for Groups I--VII and Main Results}
	
Each term is estimated by inserting $A^{\gamma}A^{-\gamma}$ at every occurrence of $B$, then grouping $A^{-\gamma}B$ to apply Assumption~\ref{ass:B}.

\paragraph{Group I--IV.}
For $j\le n-2$, writing
$$\e^{-(n-j-1)\tau A} \left( \sum_{i=1}^4 b_i(-\tau A) B \mathrm{e}^{-c_i\tau A} \right)
=(\e^{-(n-j-1)\tau A}A^{\gamma})\, \left(  \sum_{i=1}^4 b_i(-\tau A)\,(A^{-\gamma}B)\,\e^{-c_i\tau A}\right)$$
and applying \eqref{lem1}, \eqref{eq:phibound}, Assumption~\ref{ass:B} gives
$$  \left\|  (\e^{-(n-j-1)\tau A}A^{\gamma}) \left(  \sum_{i=1}^4 b_i(-\tau A)\,(A^{-\gamma}B)\,\e^{-c_i\tau A}\right) \right\| \le Ct_{n-j-1}^{-\gamma}.$$ For $j=n-1$, using
$\| A^{\gamma}b_i(-\tau A) \| \le C\tau^{-\gamma}, \, \forall i = \overline{1,4}$ instead gives $$\left\|    \sum_{i=1}^4 A^{\gamma} b_i(-\tau A)\,(A^{-\gamma}B)\,\e^{-c_i\tau A}  \right\| \le C\tau^{-\gamma}.$$ Hence, we get

\begin{equation}
	\|\mathrm I\|\le C\tau\sum_{j=0}^{n-2}t_{n-j-1}^{-\gamma}\|e_j\|
	+ C\tau^{1-\gamma}\|e_{n-1}\|. \label{eq:groupI}
\end{equation}

Concerning the estimates for $\mathrm{II}$, we observe that $\mathrm{II}$ has essentially the same structure as $\mathrm{I}$. The only difference is the additional factor $\tau a_{ik}(-\tau A)B$ appearing in $\mathrm{II}$, which results in a loss of order $\gamma$. Indeed, using
\begin{equation}
	\tau \left\| a_{ik}\left(-\tau A\right)B \right\|
	\leq \tau^{1-\gamma},
\end{equation}
we obtain
\begin{equation}
	\begin{aligned}
		\left\|\mathrm{II}\right\|
		&\leq C\tau^{1-\gamma}
		\left(
		\tau \sum_{j=0}^{n-2}
		t_{n-j-1}^{-\gamma}\left\|e_j\right\|
		+\tau^{1-\gamma}\left\|e_{n-1}\right\|
		\right) \\
		&\leq C\tau
		\sum_{j=0}^{n-2}
		t_{n-j-1}^{-\gamma}\left\|e_j\right\|
		+ C\tau^{1-\gamma}\left\|e_{n-1}\right\|.
	\end{aligned}
\end{equation}
In the last step, we have used $\tau<1$, which entails
$\tau^{1-\gamma}\leq 1$ for $\gamma\in[0,1)$; this restriction is harmless in view of the asymptotic regime $\tau\to0$. The estimates for $\mathrm{III}$ and $\mathrm{IV}$ follow in the same manner, and we therefore omit the analogous calculations.

\paragraph{Group V.}

Before deriving the estimates, we first recall two key lemmas from \cite{orig}.
\begin{lemma}\label{lem:u2u3}
	Under Assumptions~\ref{ass:sectorial}--\ref{ass:B} and $u_0\in D(A^{1+\alpha})$,
	$0<\alpha\le\tfrac12$,
	\[
	\|u''(t)\|\le Ct^{-1+\alpha}, \qquad \|u'''(t)\|\le Ct^{-2+\alpha},  \qquad \|u''''(t)\|\le Ct^{-3+\alpha},\qquad t\in(0,T].
	\]
\end{lemma}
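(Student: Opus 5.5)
We would prove Lemma~\ref{lem:u2u3} from the representations \eqref{eq16}, \eqref{eq16_3rd} and \eqref{eq16_4th}, working throughout with the perturbed operator $L := A - B$. By the perturbation remark after Assumption~\ref{ass:B}, $L$ is sectorial. After the same shift used for $A$ (which changes the derivatives only by bounded, exponentially weighted terms on $[0,T]$), we may assume that the fractional powers of $L$ are well defined. Lemma~\ref{lem1} then applies to $L$, giving
\[
\bigl\|t^{\mu}L^{\mu}\mathrm{e}^{-tL}\bigr\|\le C, \qquad \mu\ge 0,\ t\in(0,T].
\]
Each derivative has the form $u^{(k)}(t) = \pm L^{k-1-\alpha}\mathrm{e}^{-tL}L^{1+\alpha}u_0$ for $k=2,3,4$. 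Applying the smoothing estimate with $\mu = k-1-\alpha$ gives
\[
\|u^{(k)}(t)\|\le C\,t^{-(k-1-\alpha)}\,\|L^{1+\alpha}u_0\|,
\]
which is exactly $t^{-1+\alpha}$, $t^{-2+\alpha}$ and $t^{-3+\alpha}$ for $k=2,3,4$. Since $\alpha\le\tfrac12$, all exponents $k-1-\alpha$ are positive, so the estimate is used only in its nontrivial range.

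The substantive step is to show that $L^{1+\alpha}u_0$ is well defined and bounded in terms of $\|A^{1+\alpha}u_0\|$, i.e.\ that $D(A^{1+\alpha})\subset D(L^{1+\alpha})$. This is where I expect the main obstacle, and I would proceed in two steps.

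\textbf{Step 1: integer part.} Since $D(A)\subset D(A^\gamma)\subset D(B)$, we have $D(L)=D(A)$. The graph norms are equivalent, because $\|Bx\|\le C\|A^\gamma x\|\le \varepsilon\|Ax\|+C_\varepsilon\|x\|$ by the moment inequality, so $LA^{-1}$ and $AL^{-1}$ are bounded.

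\textbf{Step 2: fractional part.} We must show that $L^{\alpha}$ applied to $Lu_0$ is controlled by $\|A^{1+\alpha}u_0\|$. I would write $Lu_0 = Au_0 - Bu_0$ and treat the two terms separately.
\begin{itemize}
\item For $Au_0 \in D(A^\alpha)$, use the standard fact that $D(A^\alpha)=D(L^\alpha)$ with equivalent norms for $\alpha<1$ when $B$ is $A^\gamma$-bounded. This can be obtained by interpolation, since $D(A^\alpha)$ and $D(L^\alpha)$ lie between $X$ and $D(A)=D(L)$. A cleaner alternative is the resolvent identity $(\lambda+L)^{-1}-(\lambda+A)^{-1} = (\lambda+L)^{-1}B(\lambda+A)^{-1}$, inserted into the Balakrishnan formula for $L^{-\alpha}$ and $A^{-\alpha}$.
\item For $Bu_0$, membership in $D(A^\alpha)$ needs a commutator-type hypothesis. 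Here I would invoke Assumption~\ref{ass:beta} with $\beta\ge\alpha$, which gives $\|A^{\alpha}Bu_0\|\le C\|A^{\alpha}BA^{-1}\|\,\|Au_0\|$. If only $\beta<\alpha$ is available, one instead needs $u_0\in D(A^{1+\alpha})$ together with the relative boundedness $A^{\alpha}BA^{-\alpha-\gamma}$, which again follows from Assumption~\ref{ass:beta} by interpolation.
\end{itemize}

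Since this domain-identification step was established in \cite{orig}, in the paper I would cite it there. I would then present the smoothing argument for the new fourth-derivative bound explicitly, because it is a one-line extension of the third-derivative case.
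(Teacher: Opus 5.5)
Your smoothing step is the argument the paper relies on: it only cites \cite{orig} and writes down \eqref{eq16}--\eqref{eq16_4th}. You also correctly locate the real issue in showing $D(A^{1+\alpha})\subset D((A-B)^{1+\alpha})$. The gap is in the second bullet of Step~2, and it has three layers.
\begin{enumerate}
\item The lemma assumes only Assumptions~\ref{ass:sectorial} and~\ref{ass:B}, so Assumption~\ref{ass:beta} is not available to you.
\item Even granting it, your fallback for $\beta<\alpha$ is wrong. Interpolating between the bounded operators $BA^{-\gamma}$ and $A^{\beta}BA^{-1}$ can only place a power $A^{s}$ with $s\in[0,\beta]$ in front of $B$. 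Boundedness of $A^{\alpha}BA^{-\alpha-\gamma}$ therefore does not follow.
\item No argument can close this step, because under the stated hypotheses the inclusion, and the lemma itself, are false.
\end{enumerate}
For the third point, take the paper's own test problem: $A=-0.2\partial_{xx}$ with Dirichlet conditions, $B=\partial_x$, $\gamma=\tfrac12$, and $u_0=\sin(\pi x)\in D(A^{3/2})$. Then $w:=(A-B)u_0=0.2\pi^2\sin(\pi x)-\pi\cos(\pi x)$ has nonzero boundary values. Hence $w\in D((A-B)^{s})=D(A^{s})$ only for $s<\tfrac14$. On the other hand, $u''(t)=(A-B)\e^{-t(A-B)}w$. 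The claimed bound $\|u''(t)\|\le Ct^{-1+\alpha}$ for any $\alpha\in(\tfrac14,\tfrac12]$ would make $\int_0^\infty t^{-s}(A-B)\e^{-t(A-B)}w\,\mathrm{d}t=\Gamma(1-s)(A-B)^{s}w$ converge absolutely for every $s<\alpha$. That forces $w\in D((A-B)^{s})$ for some $s>\tfrac14$, a contradiction. In fact $\|u''(t)\|$ behaves like $t^{-3/4}$, the usual boundary layer for incompatible data, which is why the paper's worked example takes $\alpha=\tfrac14-\zeta$.

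So what is missing is a hypothesis, not a technique. The representations \eqref{eq16}--\eqref{eq16_4th} really presuppose $u_0\in D((A-B)^{1+\alpha})$; with that assumption your smoothing step is the whole proof.

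Alternatively, keep $u_0\in D(A^{1+\alpha})$ and add that $A^{\alpha}BA^{-1}$ is bounded. Assumption~\ref{ass:beta} with $\beta\ge\alpha$ suffices, as in the paper's example $\alpha=\beta=\tfrac14-\zeta$. The proof then goes through as follows:
\begin{enumerate}
\item Your Step~1 gives $D(L)=D(A)$ with equivalent graph norms.
\item The resolvent-identity argument gives $D(L^{\alpha})=D(A^{\alpha})$. The Balakrishnan integrand is $O(\lambda^{-\alpha}(1+\lambda)^{\alpha+\gamma-2})$, which is integrable precisely because $\gamma<1$.
\item Your estimate $\|A^{\alpha}Bu_0\|\le\|A^{\alpha}BA^{-1}\|\,\|Au_0\|$ controls the $B$-term.
\item Together these give $\|L^{1+\alpha}u_0\|\le C\|A^{1+\alpha}u_0\|$, and the smoothing estimate finishes.
\end{enumerate}
Drop the interpolation variant of the first bullet: $D(A^{\alpha})$ is an interpolation space only under extra hypotheses such as bounded imaginary powers. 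Note also that both Step~1 and the resolvent argument need $\gamma<1$, which the paper likewise uses tacitly.
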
 

\begin{lemma}\label{lem:bd4}
	Under Assumptions~\ref{ass:sectorial}--\ref{ass:beta} and $u_0\in D(A^{1+\alpha})$,
	$0<\alpha\le\tfrac12$,
	\[
	\|B\phi(-\tau A)Bu'(t)\|\le C\tau^{\beta-\gamma}t^{-1+\alpha},\qquad
	\|B\phi(-\tau A)Bu''(t)\|\le C\tau^{\beta-\gamma}t^{-2+\alpha},
	\]
	for any linear combination $\phi(-\tau A)$ of the $\varphi_k(-\tau A)$.
\end{lemma}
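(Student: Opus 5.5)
The statement to prove is Lemma~\ref{lem:bd4}: for any combination $\phi(-\tau A)$ of the $\varphi_k(-\tau A)$,
\[
\|B\phi(-\tau A)Bu'(t)\|\le C\tau^{\beta-\gamma}t^{-1+\alpha}, \qquad \|B\phi(-\tau A)Bu''(t)\|\le C\tau^{\beta-\gamma}t^{-2+\alpha}.
\]
The approach is to factor the product so that each piece is controlled either by Assumption~\ref{ass:B}, by Assumption~\ref{ass:beta}, or by smoothing of the perturbed semigroup.

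Write
\[
B\phi(-\tau A)Bu'(t) = (BA^{-\gamma})\,\bigl(A^{\gamma-\beta}\phi(-\tau A)\bigr)\,(A^{\beta}BA^{-1})\,\bigl(Au'(t)\bigr).
\]
Assumption~\ref{ass:B} bounds the first factor and Assumption~\ref{ass:beta} bounds the third. For the second factor, \eqref{eq:phibound} gives $\norm{A^{\gamma-\beta}\phi(-\tau A)}\le C\tau^{\beta-\gamma}$ whenever $0\le\gamma-\beta\le 1$. If $\beta>\gamma$, I would instead use boundedness of $A^{-(\beta-\gamma)}$ together with $\tau\le T$. That case gives only $C\le C'\tau^{\beta-\gamma}$ up to a $T$-dependent constant, and I would flag it as an implicit restriction $\beta\le\gamma$.

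It remains to bound $Au'(t)$. Since $u'(t)=-(A-B)\mathrm e^{-t(A-B)}u_0$, I would factor
\[
Au'(t) = -\bigl[A(A-B)^{-1}\bigr]\,(A-B)^{1-\alpha}\mathrm e^{-t(A-B)}\,(A-B)^{1+\alpha}u_0 .
\]
The operator $A(A-B)^{-1}$ is bounded because $(A-B)A^{-1}=I-BA^{-\gamma}A^{\gamma-1}$ is invertible after the usual shift, which is a standard perturbation argument. Smoothing for $A-B$, via Lemma~\ref{lem1} applied to the sectorial operator $A-B$, then gives $Ct^{-1+\alpha}$. The step needing care is $u_0\in D(A^{1+\alpha})\Rightarrow u_0\in D((A-B)^{1+\alpha})$, i.e.\ the equivalence of the fractional domains of $A$ and $A-B$ up to order $1+\alpha$. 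I would take this from \cite{orig}, as it is presumably established there for $\alpha\le\tfrac12$, using $\gamma$ small relative to $1-\alpha$ or the standard interpolation argument.

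The bound for $u''$ runs identically, with one more power: $Au''(t)=A(A-B)^{-1}(A-B)^{2-\alpha}\mathrm e^{-t(A-B)}(A-B)^{1+\alpha}u_0$, which is bounded by $Ct^{-2+\alpha}$.

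The main obstacle is the domain equivalence $D(A^{1+\alpha})=D((A-B)^{1+\alpha})$ together with the invertibility and shift bookkeeping. The semigroup factorizations themselves are routine once that is granted from \cite{orig}.
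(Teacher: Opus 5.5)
Your argument is essentially the paper's own: the paper cites \cite{orig} for the first bound and gets the second from $u''=-(A-B)^{-1}u'''$ together with $\|u'''(t)\|\le Ct^{-2+\alpha}$, which is your factorization $(BA^{-\gamma})\bigl(A^{\gamma-\beta}\phi(-\tau A)\bigr)(A^{\beta}BA^{-1})\bigl(A(A-B)^{-1}\bigr)$ applied to $-u''$ and $-u'''$. Quoting Lemma~\ref{lem:u2u3} directly removes the domain-equivalence step you call the main obstacle (the paper also leaves it to \cite{orig}). Your aside on $\beta>\gamma$ is garbled, since a constant is not $\le C'\tau^{\beta-\gamma}$ as $\tau\to0$, but your conclusion that the lemma needs $\beta\le\gamma$ is right and costs nothing, because Assumption~\ref{ass:beta} for some $\beta$ implies it for every smaller $\beta$.
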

\begin{proof}
The first bound follows from \cite{orig}. The second bound follows analogously by using
$u''(t)=-(A-B)^{-1}u'''(t)$ together with
$\|u'''(t)\|\le Ct^{-2+\alpha}$.
\end{proof}
We now proceed with the estimates, starting with the bounds for $T_n^{[1]}$ and $T_n^{[2]}$. We treat the three regimes $j=0$, $j=n-1$, and $1\le j\le n-2$ separately. We consider the case
$j=0$. Since $t_{n-1}>0$, Lemma~\ref{lem:bd4} applies directly:
\begin{equation}\label{eq:t1t2}
	\begin{aligned}
\tau^3 \| T_n^{[1]}\| + \tau^4 \| T_n^{[2]}\| & \le \tau^3 \sum_{i=2}^4 \|b_i(-\tau A)\|\,\|B \mathcal J_{2,i} Bu'(t_{n-1})\| \\
& \qquad + \tau^4 \sum_{i=2}^4 \|b_i(-\tau A)\|\,\|B \mathcal J_{3,i} Bu''(t_{n-1})\| \\
& 
\le Ct_{n-1}^{-1+\alpha} \,\tau^{3+\beta-\gamma} + Ct_{n-1}^{-2+\alpha} \,\tau^{4+\beta-\gamma} .
	\end{aligned}
\end{equation}
Similarly, the term corresponding to $j=n-1$ can be estimated by 
{\allowdisplaybreaks
	\begin{align*}
		\tau^3 \left \| \mathrm{e}^{-(n-1) \tau A}  \mathcal{T}_{1}^{[1]}   \right \|  + \tau^4 \left \| \mathrm{e}^{-(n-1) \tau A}  \mathcal{T}_{1}^{[2]}   \right \|
		& \leq  C  \tau^3 t_{n-1}^{-1+\alpha} \sum_{i=2}^4 \left \| A^{-1} B A^{\beta} \right \| \left \|  A^{-\beta +\frac{1}{2}}  \mathcal J_{2,i}  \right \|  \\
		& \qquad + C \tau^4 t_{n-1}^{-1+\alpha} \sum_{i=2}^4 \left \| A^{-1} B A^{\beta} \right \| \left \|  A^{-\beta +\frac{1}{2}}  \mathcal J_{3,i}  \right \|  \\
		&  \leq C t_{n-1}^{-1+\alpha} \tau^{3 + \beta-\gamma} + C t_{n-1}^{-1+\alpha} \tau^{4 + \beta-\gamma} .		
	\end{align*}
}We next examine the interior sum. We use the following standard result from \cite{HO2005}: there exists a bounded operator $\widehat{\phi}(-\tau A)$ such that
\begin{equation}\label{eq324}
\phi(-\tau A)-\phi(0)=(\tau A)^{\widehat{\Gamma}} \widetilde{\phi}(-\tau A),
\qquad 0 \leq \widehat{\Gamma} \leq 1 .
\end{equation}
Using Conditions 5 and 7 at $A=0$, together with \eqref{eq324} for $\phi = b_2(-\tau A), b_3(-\tau A), b_4(-\tau A),$ $\varphi_2(-\tau A), \varphi_3(-\tau A),$ and $\varphi_4(-\tau A)$, we can write the sum of the remaining terms, with $j\neq 0$ and $j\neq n-1$, as follows
{\allowdisplaybreaks
\begin{equation}\label{eq21}
	\begin{aligned}
		&  \tau^3 \sum_{j=1}^{n-2} \mathrm{e}^{-j \tau A}   \mathcal{T}_{n-j}^{[1]}  + \tau^3 \sum_{j=1}^{n-2} \mathrm{e}^{-j \tau A} \mathcal{T}_{n-j}^{[2]}  
		\\
		& =  \tau^{3+\Gamma} \sum_{j=1}^{n-2}  \left( \mathrm{e}^{-j \tau A}   A^{\Gamma } \right)  \sum_{i=2}^4 \widetilde{b_i}(-\tau A)   \left ( B   \mathcal J_{2,i} \left(- \tau A\right) B u^{\prime }\left(t_{n-j-1}\right) \right )  \\
		& \quad	+	 \tau^{4+\Gamma} \sum_{j=1}^{n-2} \left ( \mathrm{e}^{-j \tau A}    A^{\Gamma } \right )  \sum_{i=2}^4  \widetilde{b_i}(-\tau A)  \left ( B \mathcal J_{3,i}  B u^{\prime \prime }\left(t_{n-j-1}\right) \right )  \\
		& \quad +  \tau^{3 + 2\beta}  \sum_{j=1}^{n-2} \left ( \mathrm{e}^{-j \tau A} A \right )   A^{-1} B  A^{\beta} \sum_{i=2}^4  \left (  \widetilde{\varphi_i}(-\tau A) \right )  A^{\beta} B A^{-1}  \left ( A (A-B)^{-1} \right )  u^{\prime \prime }\left(t_{n-j-1}\right)  \\
		& \quad	+  \tau^{4 + 2\beta} \sum_{j=1}^{n-2} \left ( \mathrm{e}^{-j \tau A}   A  \right )  A^{-1}  B A^{\beta} \sum_{i=2}^4 \left (  \widetilde{\varphi_i}(-\tau A) \right )  A^{\beta} B A^{-1}  \left ( A (A-B)^{-1} \right )  u^{\prime  \prime \prime}\left(t_{n-j-1}\right) . 	\end{aligned} 	
\end{equation}
}
Using \eqref{parabolicsmoothing1}, and Lemmas \ref{lem:u2u3}, \ref{lem:bd4} to estimate each term of the parentheses in \eqref{eq21}, we obtain
{\allowdisplaybreaks
	\begin{equation}\label{eq75}
		\begin{aligned}
			\left \|  \tau^3 \sum_{j=1}^{n-2} \mathrm{e}^{-j \tau A}   \mathcal{T}_{n-j}^{[1]}  + \tau^3 \sum_{j=1}^{n-2} \mathrm{e}^{-j \tau A} \mathcal{T}_{n-j}^{[2]}  \right \|
			& \leq C \tau^{ 3 + \Gamma+ \beta-\gamma}  \sum_{j=1}^{n-2}	t_{j}^{- \Gamma } t_{n-j-1}^{-1+\alpha} + C \tau^{4+2\beta}  \sum_{j=1}^{n-2}	t_{j}^{-1} t_{n-j-1}^{-2+\alpha} \\
			& \leq C t_n^{-\Gamma + \alpha} \tau^{3 + \Gamma+ \beta-\gamma } + C t_n^{-1  +\zeta } \tau^{2 +2\beta + \alpha-\zeta} \\
			& \leq C t_n^{-1 + \zeta + \alpha} \tau^{3 - \zeta+ \beta-\gamma } + C t_n^{-1  +\zeta } \tau^{2 +2\beta+ \alpha-\zeta} \text{ for } \Gamma = 1 -\zeta.					
		\end{aligned}	
	\end{equation}
}We now bound $T_n^{[3]}$ using Condition~9. However, the term $T_n^{[3]}$ differs from $T_n^{[1]}$ only by the bounded scalar $c_i$. Hence, it satisfies the same bound as $T_n^{[1]}$, with a prefactor of $\tau^4$. Therefore, the convergence rate of $T_n^{[3]}$ is even higher than that of $T_n^{[1]}$. Consequently, $T_n^{[3]}$ does not affect the overall error estimate, and we may safely omit its detailed analysis.
In summary, we get the following combined bound for
\begin{equation}\label{eq:VI-final}
	\|\mathrm{V}\|
	\le C\,t_n^{-1+\alpha+\zeta}\,\tau^{\,3-\zeta+\beta-\gamma}
	+ C\,t_n^{-1+\zeta}\,\tau^{\,2+2\beta+\alpha-\zeta}.
\end{equation}
\begin{remark}
	For Group V, consider the Krogstad and Strehmel--Weiner schemes.
	Recall that, in this case, $T_n^{[1]} = T_n^{[3]} = 0$.
	Combining these bounds and collecting all cases, we obtain
	\begin{equation}\label{eq:VI-krogstad-bound}
		\mathrm{V}\big|_{\text{Krogstad / Strehmel--Weiner}}
		\le
		C\,t_n^{-1+\zeta}\,\tau^{\,2+2\beta+\alpha-\zeta}.
	\end{equation}
	For brevity, we refer the reader to the worked example in \cite{orig} for the values
	$\alpha=\tfrac14-\zeta$, $\beta=\tfrac14-\zeta$, and $\gamma=\tfrac12$. The exponent in
	\eqref{eq:VI-krogstad-bound} becomes
	\[
	2+2\beta+\alpha
	= 2 + 2\Big(\tfrac14-\zeta\Big) + \Big(\tfrac14-\zeta\Big)
	= \tfrac{11}{4} - 3\zeta.
	\]
	Thus, Group V contributes an order arbitrarily close to $\tfrac{11}{4}$
	for the Krogstad and Strehmel--Weiner schemes under these parameter values.
	This matches, for this particular group, the order $\tfrac{11}{4}$
	stated in Theorem~\ref{theorem:main} and observed numerically in
	\S\ref{sec:numerics}.
\end{remark}
\paragraph{Group VI}
We first recall the following lemma from \cite{orig}.
\begin{lemma}\label{lem:coeffB}
	Under Assumptions~\ref{ass:sectorial} and \ref{ass:B}, for any bounded
	linear combination $\phi(-\tau A)$ of the $\varphi_k(-\tau A)$ (in particular,
	any $a_{ij}(-\tau A)$ or $b_i(-\tau A)$, $1\le i,j\le 4$),
	\begin{align}
		\|\phi(-\tau A)B\| &\le C\tau^{-\gamma}, \label{eq:coeffB-a}\\
		\|e^{-tA}\phi(-\tau A)B\| &\le Ct^{-\gamma}, \qquad t\in(0,T]. \label{eq:coeffB-b}
	\end{align}
\end{lemma}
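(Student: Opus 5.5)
The two bounds in Lemma~\ref{lem:coeffB} will follow by factoring each occurrence of $B$ through $A^{-\gamma}$, as the paper already does for Groups I--IV. The catch is that $B$ sits to the \emph{right} of the coefficient, so I will write $\phi(-\tau A)B = \bigl(\phi(-\tau A)A^{\gamma}\bigr)\bigl(A^{-\gamma}B\bigr)$. Since $A^{-\gamma}B$ is only densely defined, I will read the identity on $D(B)$ and extend by continuity; this is where the bound $\norm{A^{-\gamma}B}\le C$ of Assumption~\ref{ass:B} is used, understood as boundedness of the closure of $A^{-\gamma}B$. Because $\phi(-\tau A)$ is a function of $A$, it commutes with $A^{\gamma}$, so $\phi(-\tau A)A^{\gamma}=A^{\gamma}\phi(-\tau A)$ on $D(A^\gamma)$.

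For \eqref{eq:coeffB-a} I apply \eqref{eq:phibound} with $\eta=\gamma\in(0,1]$:
\[
\norm{A^{\gamma}\phi(-\tau A)} = \tau^{-\gamma}\norm{\tau^{\gamma}A^{\gamma}\phi(-\tau A)}\le C\tau^{-\gamma}.
\]
Multiplying by $\norm{A^{-\gamma}B}\le C$ gives $\norm{\phi(-\tau A)B}\le C\tau^{-\gamma}$. Since $\phi$ is a finite linear combination of the $\varphi_k$ with $\tau$-independent (or bounded) scalar weights, the constant is uniform.

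For \eqref{eq:coeffB-b} I will let the semigroup carry the fractional power instead, by writing
\[
e^{-tA}\phi(-\tau A)B=\bigl(t^{\gamma}A^{\gamma}e^{-tA}\bigr)\,\phi(-\tau A)\,\bigl(A^{-\gamma}B\bigr)\,t^{-\gamma}.
\]
Lemma~\ref{lem1} with exponent $\gamma$ bounds the first factor uniformly for $t\in[0,T]$, and \eqref{eq:phibound} with $\eta=0$ bounds $\phi(-\tau A)$. Assumption~\ref{ass:B} then gives $\norm{e^{-tA}\phi(-\tau A)B}\le Ct^{-\gamma}$, with $C$ independent of $\tau$. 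For $t$ comparable to $\tau$ this is consistent with \eqref{eq:coeffB-a}, and one could also take the minimum of the two bounds.

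The only real subtlety is justifying the commutation $A^{\gamma}\phi(-\tau A)=\phi(-\tau A)A^{\gamma}$ and the domain bookkeeping. The commutation holds because both operators are defined through the holomorphic functional calculus (Dunford integrals over the resolvent of the shifted sectorial $A$), and the resolvents commute. I also need the shift convention of Section~\ref{sec:Analyticalframework}, so that $A^{\gamma}$ is well defined and invertible with bounded $A^{-\gamma}$. Everything else is a direct product of previously established bounds.
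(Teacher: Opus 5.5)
Your argument is correct and is essentially the paper's. The paper only recalls this lemma from \cite{orig}, but its stated technique is exactly your factorization $\phi(-\tau A)B=A^{\gamma}\phi(-\tau A)\,(A^{-\gamma}B)$ and its analogue with $\e^{-tA}$ absorbing the fractional power: it inserts $A^{\gamma}A^{-\gamma}$ at each $B$, groups $A^{-\gamma}B$, and uses $\norm{A^{\gamma}b_i(-\tau A)}\le C\tau^{-\gamma}$ and $\norm{t^{\gamma}A^{\gamma}\e^{-tA}}\le C$ in the Group~I estimates. Your remarks on density, extension by continuity, and the commutation of $A^{\gamma}$ with $\phi(-\tau A)$ supply domain bookkeeping that the paper leaves implicit.
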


We estimate $\mathrm{VI}=-\sum_{j=0}^{n-1}e^{-j\tau A}R_{n-j}$ by considering two representative terms: the stage-type term $R_n^{[1]}$ (with a cubic kernel and $u'''$) and the step-type term $R_n^{[5]}$ (with a quartic kernel and $u''''$). The remaining five terms in $R_n=\sum_{l=1}^7R_n^{[l]}$ can be bounded similarly; see \cite{orig}. We split the sum over $j$ into three regimes: $j=0$, $j=n-1$, and $1\le j\le n-2$.

For the case $j=0$, using \eqref{eq:phibound} together with Lemmas~\ref{lem:u2u3} and~\ref{lem:coeffB}, we obtain 
\[
\begin{aligned}
	\|R_n^{[1]}\| &\le \tau\,\|b_{2}(-\tau A)B\|
	\int_0^{c\tau}\!\!\int_0^\xi \|\e^{-(c\tau-\xi)A}B\|\,\frac{(\xi-\sigma)^2}{2}\,
	\|u'''(t_{n-1}+\sigma)\|\,\mathrm d\sigma\,\mathrm d\xi \\
	&\le C\,t_{n-1}^{-2+\alpha}\,\tau\int_0^{c\tau}\!\!\int_0^\xi (c\tau-\xi)^{-\gamma}(\xi-\sigma)^2\,\mathrm d\sigma\,\mathrm d\xi
	\le C\,t_{n-1}^{-2+\alpha}\,\tau^{5-\gamma},
\end{aligned}
\]
\[
\begin{aligned}
	\|R_n^{[5]}\| &\le \int_0^\tau\!\!\int_0^\xi \|\e^{-(\tau-\xi)A}B\|\,\frac{(\xi-\sigma)^3}{6}\,
	\|u''''(t_{n-1}+\sigma)\|\,\mathrm d\sigma\,\mathrm d\xi \\
	&\le C\,t_{n-1}^{-3+\alpha}\int_0^\tau\!\!\int_0^\xi (\tau-\xi)^{-\gamma}(\xi-\sigma)^3\,\mathrm d\sigma\,\mathrm d\xi
	\le C\,t_{n-1}^{-3+\alpha}\,\tau^{5-\gamma}.
\end{aligned}
\]
For the case $j=n-1$, the term is $\e^{-(n-1)\tau A}R_1$, where $R_1$ is defined at $t_0=0$ using the lower-order Taylor truncation, with $u''$ and $u'''$ in place of $u'''$ and $u''''$, respectively:
\[
R_1^{[1]} := \tau\,b_2(-\tau A)B\int_0^{c\tau}e^{-(c\tau-\xi)A}\int_0^\xi (\xi-\sigma)\,Bu''(\sigma)\,\mathrm d\sigma\,\mathrm d\xi,
\]
\[
R_1^{[5]} := \int_0^\tau e^{-(\tau-\xi)A}\int_0^\xi \frac{(\xi-\sigma)^2}{2}\,Bu'''(\sigma)\,\mathrm d\sigma\,\mathrm d\xi.
\]
Using Lemmas~\ref{lem:u2u3} and~\ref{lem:coeffB}, we get 
\[
\begin{aligned}
	\|\e^{-(n-1)\tau A}R_1^{[1]}\| &\le \tau\,\|\e^{-(n-1)\tau A}b_2(-\tau A)B\|
	\int_0^{c\tau}\!\!\int_0^\xi \|\e^{-(c\tau-\xi)A}B\|\,(\xi-\sigma)\,\|u''(\sigma)\|\,\mathrm d\sigma\,\mathrm d\xi \\
	&\le C\,t_{n-1}^{-\gamma}\,\tau\int_0^{c\tau}\!\!\int_0^\xi (c\tau-\xi)^{-\gamma}(\xi-\sigma)\,\sigma^{-1+\alpha}\,\mathrm d\sigma\,\mathrm d\xi
	\le C\,t_{n-1}^{-\gamma}\,\tau^{3+\alpha-\gamma},
\end{aligned}
\]
\[
\begin{aligned}
	\|\e^{-(n-1)\tau A}R_1^{[5]}\| &\le \|\e^{-(n-1)\tau A}A^\gamma\|
	\int_0^\tau\!\!\int_0^\xi \|\e^{-(\tau-\xi)A}\|\,\|A^{-\gamma}B\|\,\frac{(\xi-\sigma)^2}{2}\,\|u'''(\sigma)\|\,\mathrm d\sigma\,\mathrm d\xi \\
	&\le C\,t_{n-1}^{-\gamma}\int_0^\tau\!\!\int_0^\xi (\xi-\sigma)^2\,\sigma^{-2+\alpha}\,\mathrm d\sigma\,\mathrm d\xi
	\le C\,t_{n-1}^{-\gamma}\,\tau^{4+\alpha}.
\end{aligned}
\]
For the case $1\le j\le n-2$, the interior sum, consisting of the terms with $j\neq0$ and $j\neq n-1$, can be bounded using Assumption~\ref{ass:beta}, \eqref{eq:phibound}, and Lemmas~\ref{lem:u2u3} and~\ref{lem:coeffB} as follows:
	\begin{equation}\label{eq78} 
	\begin{aligned}
		\left \| \sum_{j=1}^{n-2} \mathrm{e}^{-j \tau A}  \mathcal{R}_{n-j}^{[1]}  \right \| 
		& \leq  \tau  \sum_{j=1}^{n-2} \left\| \mathrm{e}^{-j \tau A} A \right\| \left\| b_2(-\tau A)   \right\| \| A^{-1} B A^\beta \|  \times   \\
		&  \int_0^{c_2 \tau}\int_0^\xi  \left \| \mathrm{e}^{-\left( c_2 \tau-\xi \right) A} A^{\gamma - \beta}  \right\| \| A^{-\gamma} B \| \frac{(\xi - \sigma)^2}{2}   \left \| u^{  \prime \prime \prime}\left(t_{n-j-1}+\sigma\right) \right \|  \mathrm{d} \sigma   \mathrm{d} \xi  \\					
		& \leq 	C \tau  \sum_{j=1}^{n-2} t_{j}^{-1} t_{n-j-1}^{\alpha-2} 	\int_0^{c_2 \tau} \int_0^\xi \left( c_2 \tau-\xi \right)^{\beta-\gamma} \frac{(\xi - \sigma)^2}{2}  \mathrm{d} \sigma   \mathrm{d} \xi  \\ & \leq	C t_n^{-1+\zeta+\alpha} \tau^{3-\gamma+\beta -\zeta} ,
	\end{aligned}
\end{equation}
{\allowdisplaybreaks
	\begin{equation}\label{eq24}
		\begin{aligned}
			& \left \| \sum_{j=1}^{n-2} \mathrm{e}^{-j \tau A}  \mathcal{R}_{n-j}^{[5]}   \right \|  \leq  \sum_{j=1}^{n-2} \left\| \mathrm{e}^{-j \tau A} A^{1-\zeta} \right \| \times 
			\\ & \qquad \int_0^{ \tau} \int_0^\xi  \left\| \mathrm{e}^{-\left(  \tau-\xi \right) A} A^{\zeta} \right \|    \frac{ (\xi - \sigma)^3}{6} \left \| A^{-1}BA^\beta \right \|  \left \| (A-B)^{-\beta} u^{\prime \prime \prime \prime}\left(t_{n-j-1}+\sigma\right) \right \|  \mathrm{d} \sigma   \mathrm{d} \xi  \\					 
			&  \qquad \qquad \qquad \quad \, \, \, \, \, \leq	C  \sum_{j=1}^{n-2} t_{j}^{\zeta-1} t_{n-j-1}^{\gamma+\beta-3} \int_0^{ \tau} \int_0^\xi  \frac{ (\xi - \sigma)^3}{6} \left(  \tau-\xi \right)^{-\zeta} \mathrm{d} \sigma   \mathrm{d} \xi \\
			&   \qquad \qquad \qquad \quad \, \, \, \, \, \leq 	C \tau^{3+\gamma+\beta-3\zeta} \sum_{j=1}^{n-2} t_{j}^{\zeta-1} t_{n-j-1}^{-1 +\zeta}    \leq 	C t_n^{-1+2\zeta } \tau^{2+\gamma+\beta-3\zeta}  .
		\end{aligned}
	\end{equation}
}Combining the three cases $j=0$, $j=n-1$, and the interior sum, we obtain
\begin{equation}\label{eq:VII-bound}
	\|\mathrm{VI}\|\le 	C t_n^{-1+2\zeta } \tau^{2+\gamma+\beta-3\zeta}.
\end{equation}
		
\paragraph{Group VII}
We observe that group VII has a structure very similar to that of $T_n^{[1]}$ in V. The difference is that it contains an additional factor $a_{ij}(-\tau A)B$, while the power of $\tau$ is $\tau^4$ instead of $\tau^3$ in $T_n^{[1]}$. The term $a_{ij}(-\tau A)B$ can be estimated using Lemma \ref{lem:coeffB}, resulting in a loss of a factor $\tau^{-\gamma}$. However, we gain an additional factor $\tau$, so that, overall, the estimate for VII gains a factor $\tau^{1-\gamma}$ compared with the estimate for $T_n^{[1]}$ in V. For example, we consider the case $l=0$. Applying Lemmas \ref{lem:coeffB} and \ref{lem:bd4}, we  obtain
\[
\begin{aligned}
	\tau^4 \| W_n\| &\leq 
	\sum_{i=3}^4 \sum_{j=2}^{i-1} \| b_i(-\tau A) \| \,
	\| B \, a_{ij}(-\tau A) \| \,
	\| B \, \mathcal J_{2,j} \, B u'(t_{n-1}) \|  \\
	&\leq C\,t_{n-1}^{-1+\alpha}\,\tau^{\,4+\beta-2\gamma}.
\end{aligned}
\]
In comparison, the estimate for $T_n^{[1]}$ in \eqref{eq:t1t2} is
\[
\begin{aligned}
	\tau^3 \| T_n^{[1]} \| \leq
	Ct_{n-1}^{-1+\alpha}\,\tau^{3+\beta-\gamma}.
\end{aligned}
\]
Thus, the estimate for $\tau^4 \| W_n \|$ is of order $1-\gamma$ higher than that for $\tau^3 \| T_n^{[1]} \|$. The same argument applies to all three cases: $l=0$, $l=n-1$, and $l\neq 0,n-1$.  Therefore, we omit the details.

We are now ready to state the main result.
\begin{theorem}\label{theorem:main}
Let Assumptions~\ref{ass:sectorial}--\ref{ass:beta} hold. Additionally, assume that there exists some $0<\alpha \leq 1/2$ such that
$u_0\in D(A^{1+\alpha})$. Suppose that order conditions~1--4 and~6 of
Table~\ref{tab:order4} hold in strong form, while conditions~5 and~7--9 hold
in the \emph{very weak form} with $A=0$.
Then the numerical solution of \eqref{eq:model} obtained using a four-stage,
fourth-order exponential Runge--Kutta method \eqref{eq:scheme4} satisfies
	\begin{equation}\label{eq:conjbound}
		\|u_n-u(t_n)\|\le C\,\min\{t_n^{-1+\alpha+\zeta},\,t_n^{-1+\zeta},t_n^{-1+2\zeta }\}\,
		\min\big\{\tau^{\,3+\beta-\gamma-\zeta},\ \tau^{\,2+\alpha+2\beta-\zeta},\ \tau^{2+\gamma+\beta-3\zeta} \big\}
	\end{equation}
	uniformly for $0<n\tau\le T$, with $C$ depending on $T$ but not on $n,\tau$.
\end{theorem}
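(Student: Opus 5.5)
The plan is to combine the group bounds already derived into a single inequality for $\|e_n\|$ and then close it with a discrete Gronwall lemma with weakly singular kernel. Starting from the representation $e_n=\mathrm I+\dots+\mathrm{VII}$, the bounds \eqref{eq:groupI} and the analogous bounds for II--IV give
\[
\|\mathrm I\|+\|\mathrm{II}\|+\|\mathrm{III}\|+\|\mathrm{IV}\|\le C\tau\sum_{j=0}^{n-2}t_{n-j-1}^{-\gamma}\|e_j\|+C\tau^{1-\gamma}\|e_{n-1}\|.
\]
The inhomogeneous part is controlled by \eqref{eq:VI-final} for V, \eqref{eq:VII-bound} for VI, and, for VII, the remark that its bound is the one for $\tau^3T^{[1]}$ multiplied by $\tau^{1-\gamma}\le 1$. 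It remains to record the boundary contributions $j=0$ and $j=n-1$ of V and VI, such as $t_{n-1}^{-1+\alpha}\tau^{3+\beta-\gamma}$, $t_{n-1}^{-2+\alpha}\tau^{4+\beta-\gamma}$, $t_{n-1}^{-\gamma}\tau^{3+\alpha-\gamma}$ and $t_{n-1}^{-2+\alpha}\tau^{5-\gamma}$. Using $\tau\le t_{n-1}$ for $n\ge 2$, each of these is of the form $t_n^{-1+\cdot}\tau^{p}$ with $p$ at least one of the three exponents in \eqref{eq:conjbound}. For example, $t_{n-1}^{-2+\alpha}\tau^{4+\beta-\gamma}\le t_{n-1}^{-1+\alpha}\tau^{3+\beta-\gamma}$. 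Also $t_{n-1}\simeq t_n$ for $n\ge 2$, and the case $n=1$ is treated directly. The outcome is a defect estimate $\|\mathrm V+\mathrm{VI}+\mathrm{VII}\|\le C\,\Lambda(t_n)\,\tau^{p}$, where $\Lambda$ collects the time weights $t_n^{-1+\alpha+\zeta}$, $t_n^{-1+\zeta}$, $t_n^{-1+2\zeta}$ and $p$ is the minimum exponent.

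Next I will absorb the $\tau^{1-\gamma}\|e_{n-1}\|$ term. One option is to insert $e_{n-1}$ through the one-step recursion \eqref{eq:e_n-full}. A simpler option, valid for $\gamma<1$, is to note that $\tau^{1-\gamma}=\tau\,t_1^{-\gamma}$ is precisely the $j=n-1$ term of the kernel sum. This gives
\[
\|e_n\|\le C\tau\sum_{j=0}^{n-1}t_{n-j}^{-\gamma}\|e_j\|+C\,\Lambda(t_n)\,\tau^{p},\qquad e_0=0.
\]
I will then apply the standard discrete Gronwall lemma for weakly singular kernels (Dixon--McKee, as used in \cite{orig} and \cite{HO2005}). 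Its hypothesis is that the kernel $t^{-\gamma}$ and the forcing $t^{-1+\epsilon}$ are both integrable, with $\epsilon$ equal to one of $\alpha+\zeta$, $\zeta$ or $2\zeta$. The lemma yields $\|e_n\|\le C\,\Lambda(t_n)\tau^p$ with $C$ depending only on $T$, because discrete convolutions of the form $\tau\sum_j t_{n-j}^{-\gamma}t_j^{-1+\epsilon}\le C t_n^{-\gamma+\epsilon}\le C T^{1-\gamma}t_n^{-1+\epsilon}$ reproduce the weight. The $\min$ over exponents in \eqref{eq:conjbound} should be understood as the worst, that is smallest, power of $\tau$ and the most singular time weight. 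Since all three weights are of the form $t_n^{-1+\text{small}}$, they can be combined into the single weight $t_n^{-1+\zeta}$ up to renaming $\zeta$.

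The main obstacle is the bookkeeping in the first step. The remaining terms of VI ($R^{[2]},R^{[3]},R^{[4]},R^{[6]},R^{[7]}$) and the boundary and interior cases of VII were only sketched in the text. For each of them I have to check that the interior sum $\sum_j t_j^{-a}t_{n-j-1}^{-b}$ with $a+b\ge 1$ yields a power of $\tau$ no worse than the claimed one. I also have to handle the logarithmic losses, by consistently trading $t_j^{-1}$ for $t_j^{-1+\zeta}\tau^{-\zeta}$, so that the singularity ends up as $t_n^{-1+\zeta}$ and not $t_n^{-1}$. I would check the worst case, the $u''''$ remainder $R^{[4]}$, explicitly, using the same splitting as in \eqref{eq24}, and treat the rest by analogy.
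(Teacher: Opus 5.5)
Your proposal follows the paper's own proof: you collect the Group I--IV bounds into a weakly singular convolution in $\|e_j\|$, bound Groups V--VII by the already-derived defect estimates, and close with the discrete Gronwall lemma, which is what the paper does via Theorem~3.3 of \cite{orig}. Your three refinements are consistent with what the paper's argument implicitly uses: treating $C\tau^{1-\gamma}\|e_{n-1}\|$ as the $j=n-1$ kernel term $\tau t_1^{-\gamma}\|e_{n-1}\|$, restricting explicitly to $\gamma<1$, and reading the $\min$'s in the statement as the smallest $\tau$-exponent with the most singular time weight.
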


\begin{proof}
	Taking norms in \eqref{eq:e_n-full} and applying the triangle inequality to
	each group, we obtain
	\[
	\begin{aligned}
		\|e_n\| 
		&\leq C\tau\sum_{j=0}^{n-2}t_{n-j-1}^{-\gamma}\|e_j\|
		+ C\tau^{1-\gamma}\|e_{n-1}\|
		+ C\tau^{2-\gamma}\sum_{j=0}^{n-1}\|e_j\| \\
		&\quad
		+ C\tau^{3-2\gamma}\sum_{j=0}^{n-1}\|e_j\|
		+ C\tau^4\sum_{j=0}^{n-1}\|e_j\| \\
		&\quad
		+ C\,\min\{t_n^{-1+\alpha+\zeta},\,t_n^{-1+\zeta},\,t_n^{-1+2\zeta}\}\,
		\min\big\{
		\tau^{\,3+\beta-\gamma-\zeta},\,
		\tau^{\,2+\alpha+2\beta-\zeta},\,
		\tau^{\,2+\gamma+\beta-3\zeta}
		\big\}.
	\end{aligned}
	\]
	We apply the same technique as in Theorem 3.3 of \cite{orig} to bound
	the terms in groups I--IV, which yields
	\[
	\begin{aligned}
		\|e_n\|
		&\leq C\tau\sum_{j=1}^{n-1}t_{n-j}^{-\gamma}\|e_j\|
		+ C\tau^{1-\gamma}\|e_{n-1}\| \\
		&\quad
		+ C\,\min\{t_n^{-1+\alpha+\zeta},\,t_n^{-1+\zeta},\,t_n^{-1+2\zeta}\}\,
		\min\big\{
		\tau^{\,3+\beta-\gamma-\zeta},\,
		\tau^{\,2+\alpha+2\beta-\zeta},\,
		\tau^{\,2+\gamma+\beta-3\zeta}
		\big\}.
	\end{aligned}
	\]
	The proof is completed by applying a discrete Gronwall lemma \cite{HO2010}.
\end{proof}

\section{Numerical investigations}\label{sec:numerics}

 We consider the following one-dimensional advection--diffusion problem
\begin{equation}
	\partial_t u(t,x) - 0.2\,\partial_{xx}u(t,x) = \partial_x u(t,x), \qquad (t,x)\in[0,1]\times[0,1],
\end{equation}
with homogeneous Dirichlet boundary conditions and initial datum $u(0,x) = \sin (\pi x) \in D(A^n)$, $\forall n \in \mathbb{N}_{\geq 1}$, to fulfill part of the requirements of Theorem \ref{theorem:main}. The operator $A$ and $B$ are discretized in space by the standard second-order central finite-difference scheme
on a uniform grid with $n=399$ interior points, resulting in a mesh size of $h = 1/400$. Consequently, the reported error plots reflect only the temporal discretization error of the exponential Runge–Kutta method. The spatial discretization error is embedded in the semi-discrete model and is not separately assessed here. This produces the non-commuting discrete
operators $A = -0.2 L$ (with $L$ the standard second-difference matrix) and $B$ the skew-symmetric central-difference approximation of
$\partial_x$. In particular, the advective term is approximated as follows
\[
\partial_x u \approx \frac{u_{i+1} - u_{i-1}}{2h},
\]
while the diffusive term is discretized as
\[
\partial_{xx} u \approx \frac{u_{i+1} - 2u_i + u_{i-1}}{h^2}.
\]
This results in the discrete matrices $A$ and $B$, respectively. Note that the operator $A$ includes the homogeneous Dirichlet boundary conditions; therefore, the operators $A$ and $B$ do not commute in both the continuous and discrete cases. Additionally, the operators \( A^{-\frac{1}{2}}B \) and \( BA^{-\frac{1}{2}} \) are bounded and adhere to Assumption \ref{ass:B} with \( \gamma = \frac{1}{2} \). 

The implementation of exponential Runge--Kutta methods involves approximating the application of a matrix function to a vector. This is typically done by computing a weighted sum of functions \( \varphi_i(-\tau A) \)applied to vectors, written as \( \sum_{i=1}^k \varphi_i(-\tau A) v_k \).  To improve efficiency, an augmented matrix approach is used (see \cite{doi:10.1137/100788860}). This transforms the sum into a single matrix exponential acting on a vector, represented as \( \operatorname{e}^{-\tau \tilde{A}} V_0 \). The reference solution is computed as $u(T) = \operatorname{e}^{-T(A-B)} u_0$ using MATLAB's built-in \texttt{expm} function. For completeness, we specify the discrete norms used to measure the
numerical errors. For a grid function $v=(v_i)_{i=1}^N$ on a uniform
mesh with mesh width $h$, we define 
\[
\|v\|_{1,h}
= h\sum_{i=1}^N |v_i|,
\qquad
\|v\|_{2,h}
= \left(h\sum_{i=1}^N |v_i|^2\right)^{1/2},
\qquad
\|v\|_{\infty,h}
= \max_{1\leq i\leq N}|v_i|.
\] 

In our numerical experiments, we focus on Krogstad's classical four-stage scheme, given by the butcher tableau
   
\begin{equation}
	\begin{array}{c|cccc}
		0 & & & & \\[6pt]
		\frac{1}{2} & \frac{1}{2}\varphi_{1,2} & & & \\[4pt]
		\frac{1}{2} & \frac{1}{2}\varphi_{1,3} - \varphi_{2,3} & \varphi_{2,3} & & \\[4pt]
		1 & \varphi_{1,4} - 2\varphi_{2,4} & \varphi_{2,4} & \varphi_{2,4} & \\[4pt]
		\hline
		\rule{0pt}{14pt} & \varphi_1 - 3\varphi_2 + 4\varphi_3 & 2\varphi_2 - 4\varphi_3 & 2\varphi_2 - 4\varphi_3 & 4\varphi_3 - \varphi_2
	\end{array}
\end{equation}
as well as the classical fourth-order scheme of Strehmel and Weiner, 
\begin{equation}
	\begin{array}{c|cccc}
		0 & & & & \\[6pt]
		\frac{1}{2} & \frac{1}{2}\varphi_{1,2} & & & \\[4pt]
		\frac{1}{2} & \frac{1}{2}\varphi_{1,3} - \frac{1}{2}\varphi_{2,3} & \frac{1}{2}\varphi_{2,3} & & \\[4pt]
		1 & \varphi_{1,4} - 2\varphi_{2,4} & 0 & 2\varphi_{2,4} & \\[4pt]
		\hline
		\rule{0pt}{14pt} & \varphi_1 - 3\varphi_2 + 4\varphi_3 & 2\varphi_2 - 4\varphi_3 & 2\varphi_2 - 4\varphi_3 & 4\varphi_3 - \varphi_2
	\end{array}.
\end{equation}
Here, $\varphi_{i,j}$ denotes $\varphi_i(-c_j z)$, where $c_2 = c_3 = \frac{1}{2}$ and $c_4 = 1$, and $\varphi_i = \varphi_i(-z)$).~Both methods satisfy the fourth-order conditions in accordance with the order conditions stated in Theorem~\ref{theorem:main}. The present
experiment isolates the fourth-order scheme against the reference slope $11/4 = 2.75$, the
order predicted by Theorem \ref{theorem:main}.
 
\begin{figure}[H]
	\centering 
\includegraphics[width=0.65\linewidth]{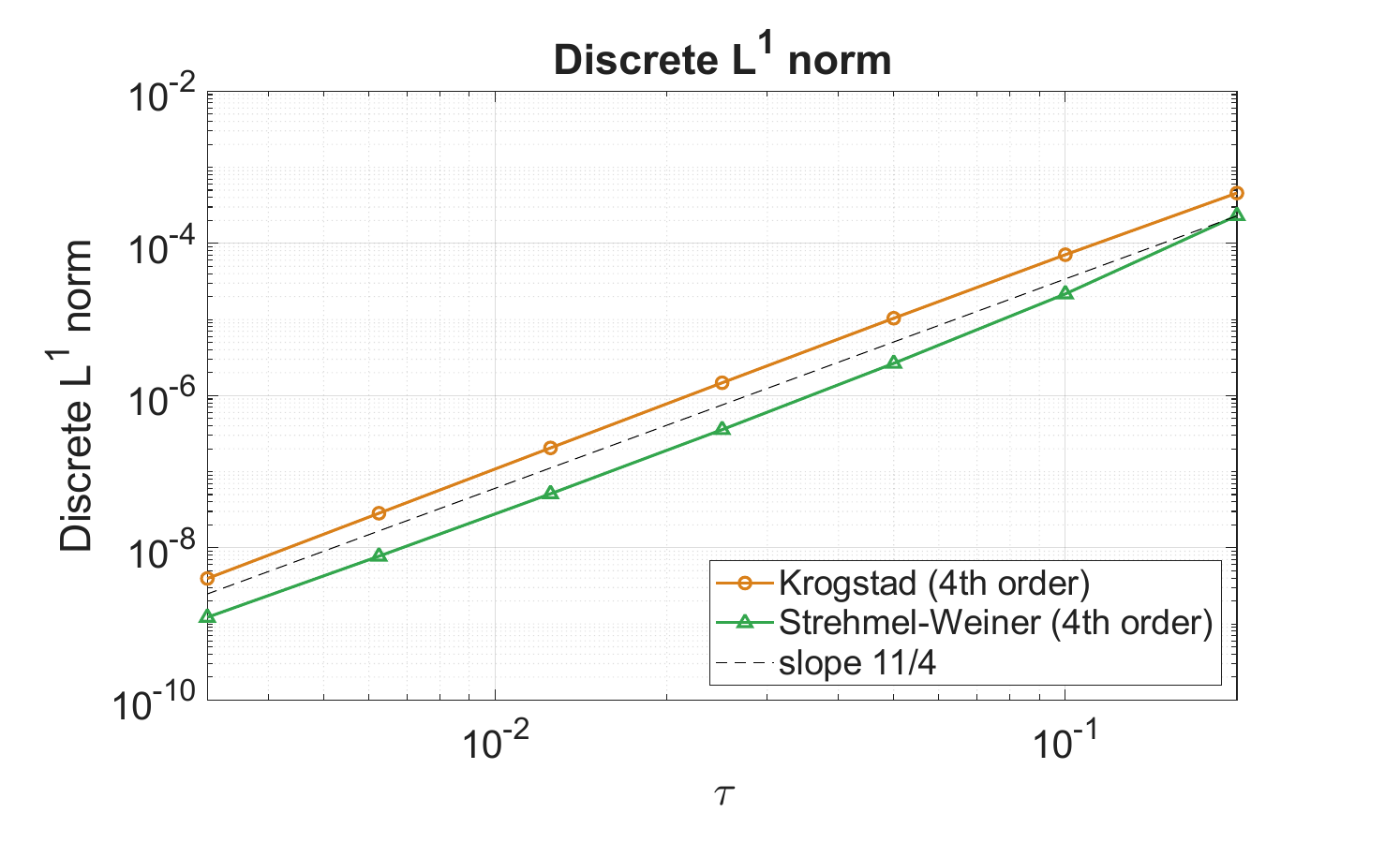} \\[1ex]
\includegraphics[width=0.65\linewidth]{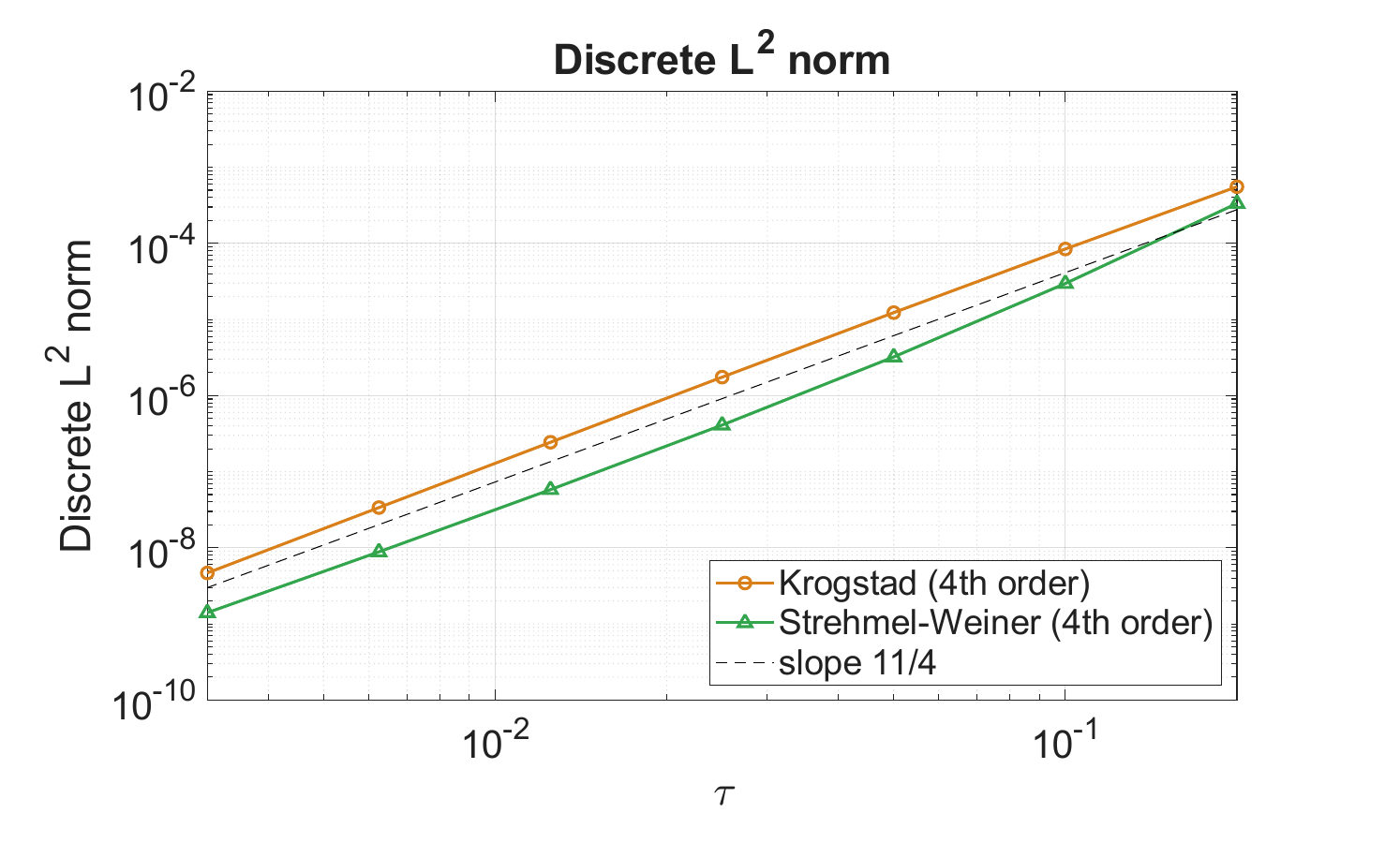} \\[1ex]
\includegraphics[width=0.65\linewidth]{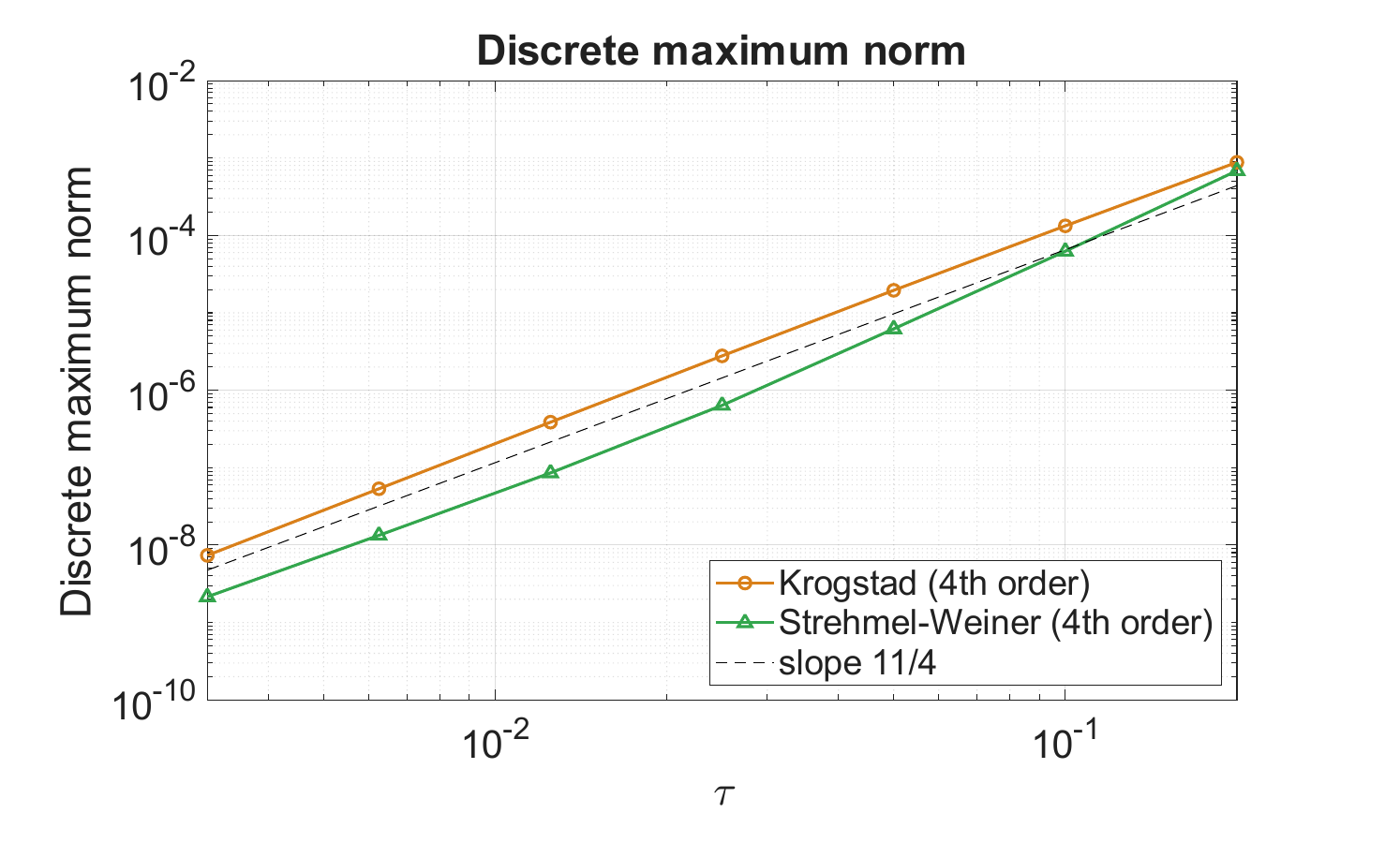}
		\caption{Global error of the fourth-order Krogstad and
			Strehmel--Weiner schemes versus the time step $\tau$, measured in the
			discrete $\|\cdot\|_{1,h}$ (top), $\|\cdot\|_{2,h}$ (middle), and
			$\|\cdot\|_{\infty,h}$ (bottom) norms with respect to the reference
			solution $\mathrm{e}^{-(A-B)}u_0$. The dashed line in each panel
			indicates the reference slope $11/4$.}
	\label{fig:orders}
\end{figure}

As shown in Figure~\ref{fig:orders}, the numerical error curves concentrate almost exactly around the fractional slope of 11/4 = 2.75 across all three discrete norms. Notably, since $\zeta$ in Theorem \ref{theorem:main} can be chosen arbitrarily small, the order reduction induced by the factor $-\zeta$ can be made arbitrarily small as well. Consequently, the resulting loss of accuracy is negligible in practice. This precisely matches the theoretical prediction formulated in Theorem~\ref{theorem:main} for this problem, thereby confirming the sharpness of our error bounds and the practical occurrence of order reduction. 

To examine whether the temporal convergence behavior of the Krogstad
scheme is affected by spatial refinement, we repeat the numerical
experiment on the meshes
\[
h=\frac{1}{200},\qquad
\frac{1}{400},\qquad
\frac{1}{800},\qquad
\frac{1}{1600}.
\]
For each fixed \(h\), the time step is successively refined while the
temporal error is measured with respect to the reference solution of the
corresponding semidiscrete problem. More precisely, at \(t_n=n\tau\), we
set
\[
E_h(\tau)
:=
\left\|u_n-\mathrm{e}^{-t_n(A-B)}u_0\right\|,
\]
where \(A\), \(B\), and \(u_0\) denote the corresponding spatially
discretized quantities for the mesh size \(h\).

\begin{figure}[h!]
	\subfigure{\includegraphics[width=0.53\textwidth]{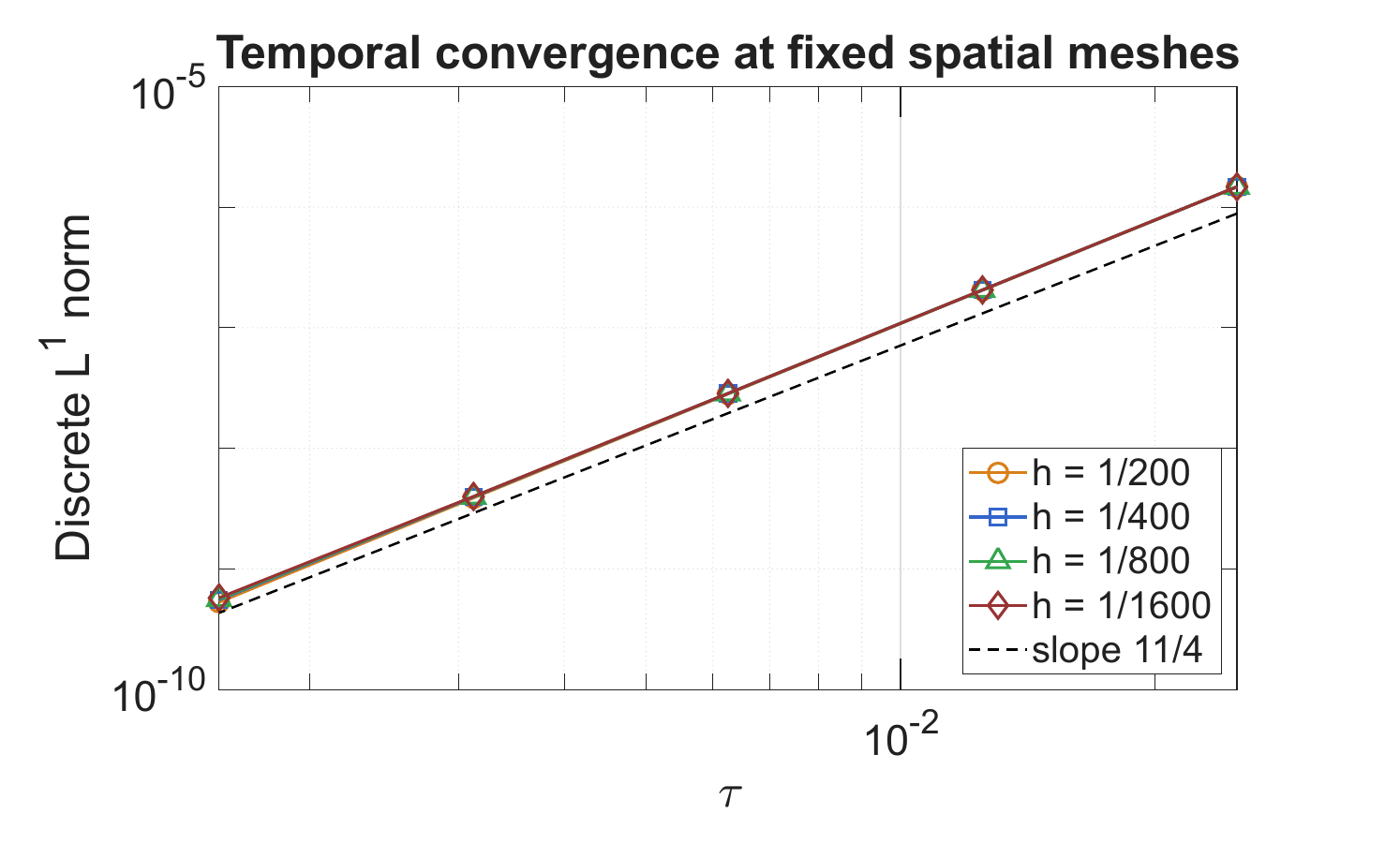}}
	\subfigure{\includegraphics[width=0.53\textwidth]{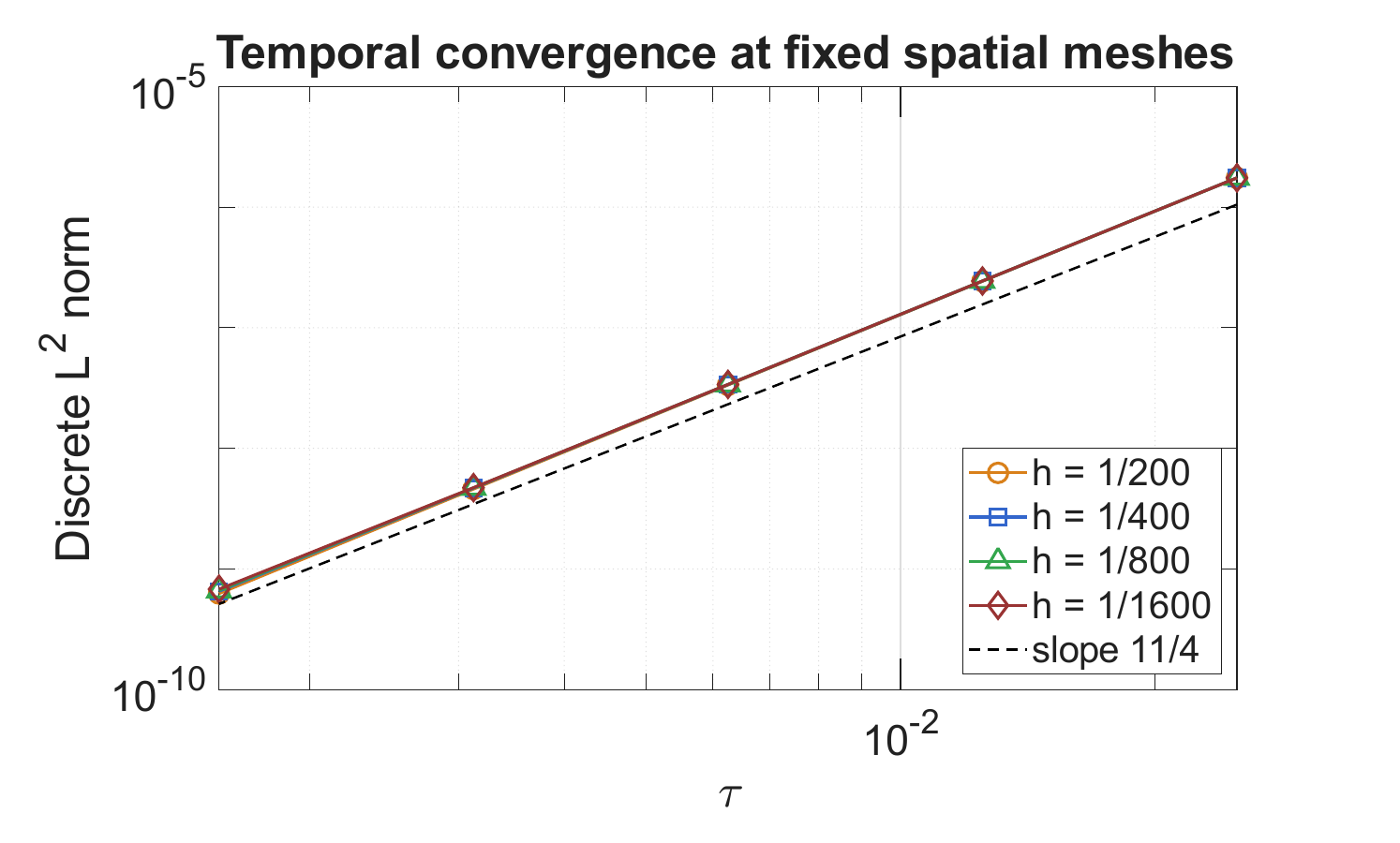}}
	\begin{center}
		\subfigure{\includegraphics[width=0.53\textwidth]{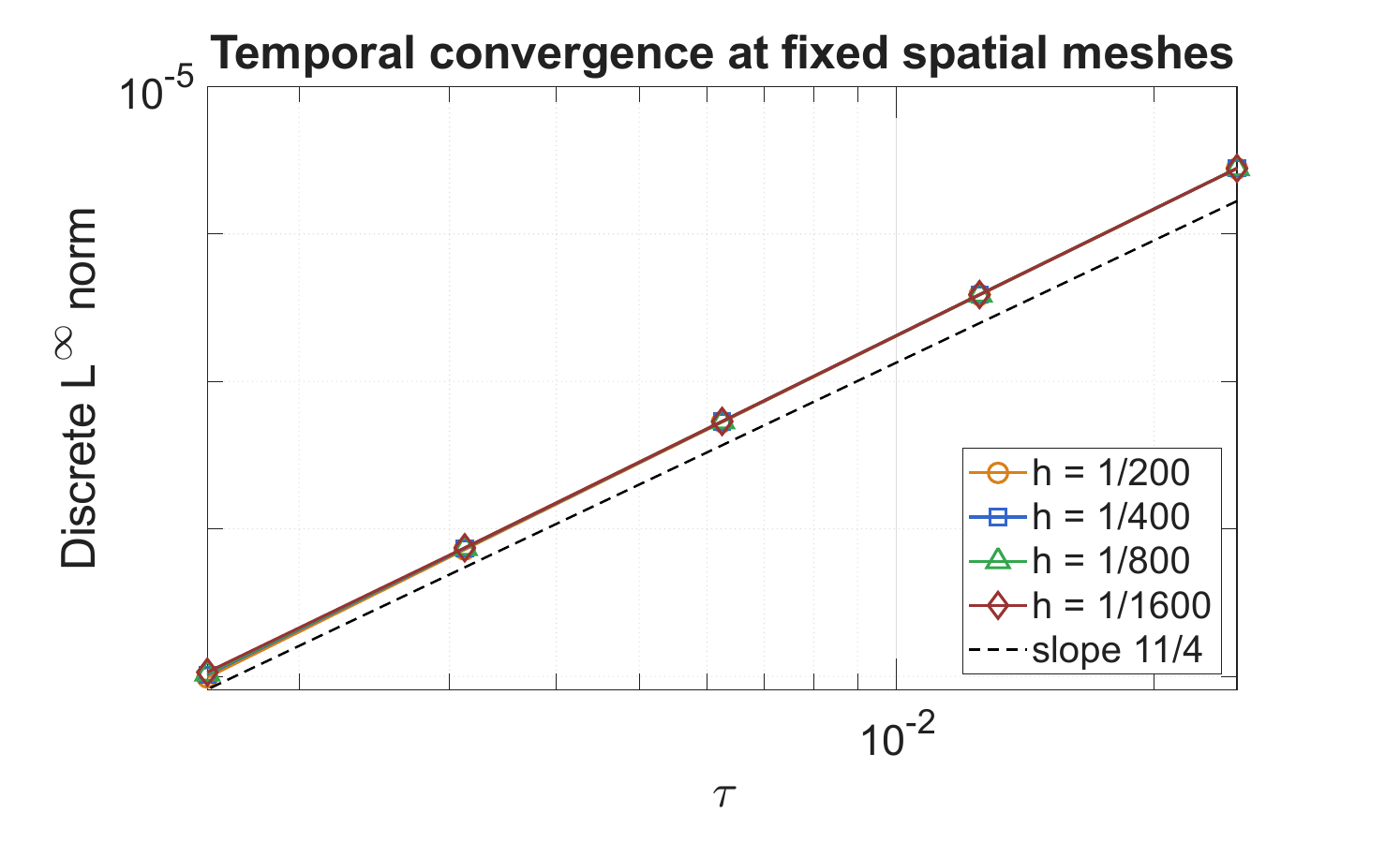}}
	\end{center}
\caption{Temporal convergence of the Krogstad scheme in the discrete
	$L^1(\Omega)$, $L^2(\Omega)$, and $L^\infty(\Omega)$ norms for different
	spatial mesh sizes $h$. The dashed lines indicate the reference slope $11/4$.}   
	\label{fig:mesh_refinement}
\end{figure}

Figure \ref{fig:mesh_refinement} displays the resulting
temporal errors in the discrete \(L^1(\Omega)\), \(L^2(\Omega)\), and
\(L^\infty(\Omega)\) norms, respectively. The convergence curves exhibit
the same asymptotic behavior as the spatial mesh is refined and are
consistent with the reference rate \(11/4\).

We additionally quantify the dependence of the error prefactor on the
spatial mesh. For each norm and each \(h\), let
\[
C_h^{\mathrm{num}}
=
\max_{\tau\in\mathcal{T}}
\tau^{-11/4} E_h(\tau),
\] 
where \(\mathcal{T}\) is the set of time-step sizes used in the
experiment. The computed numerical error constants for both the Krogstad and Strehmel--Weiner schemes are summarized in Table~\ref{tab:mesh_constants}.

\begin{table}[H]
	\centering
	\caption{Numerical error constants \(C_h^{\mathrm{num}}\) for the
		Krogstad and Strehmel--Weiner schemes under spatial mesh refinement.}
	\label{tab:mesh_constants}
	\begin{tabular}{cc|cccc}
		\hline
		Scheme & Norm
		& \(h=1/200\) & \(h=1/400\) & \(h=1/800\) & \(h=1/1600\)\\
		\hline
		Krogstad
		& \(L^1(\Omega)\)
		& \(3.7371\times10^{-2}\)
		& \(3.7431\times10^{-2}\)
		& \(3.7447\times10^{-2}\)
		& \(3.7451\times10^{-2}\)\\
		
		& \(L^2(\Omega)\)
		& \(4.4336\times10^{-2}\)
		& \(4.4406\times10^{-2}\)
		& \(4.4424\times10^{-2}\)
		& \(4.4429\times10^{-2}\)\\
		
		& \(L^\infty(\Omega)\)
		& \(7.0682\times10^{-2}\)
		& \(7.0790\times10^{-2}\)
		& \(7.0818\times10^{-2}\)
		& \(7.0825\times10^{-2}\)\\
		\hline
		Strehmel--Weiner
		& \(L^1(\Omega)\)
		& \(9.0781\times10^{-3}\)
		& \(1.0315\times10^{-2}\)
		& \(1.0792\times10^{-2}\)
		& \(1.1485\times10^{-2}\)\\
		
		& \(L^2(\Omega)\)
		& \(1.0493\times10^{-2}\)
		& \(1.1936\times10^{-2}\)
		& \(1.2494\times10^{-2}\)
		& \(1.3303\times10^{-2}\)\\
		
		& \(L^\infty(\Omega)\)
		& \(1.6204\times10^{-2}\)
		& \(1.8406\times10^{-2}\)
		& \(1.9270\times10^{-2}\)
		& \(2.0525\times10^{-2}\)\\
		\hline
	\end{tabular}
\end{table}

For the
Krogstad scheme, the values of \(C_h^{\mathrm{num}}\) are essentially
unchanged under spatial refinement and approach approximately
\(3.75\times10^{-2}\), \(4.44\times10^{-2}\), and
\(7.08\times10^{-2}\) in the discrete \(L^1(\Omega)\),
\(L^2(\Omega)\), and \(L^\infty(\Omega)\) norms, respectively.

For the Strehmel--Weiner scheme, a moderate increase in
\(C_h^{\mathrm{num}}\) is observed as the mesh is refined. Nevertheless,
the values remain of the same magnitude and bounded over the range of
spatial meshes considered. In particular, for the finest mesh
\(h=1/1600\), the values are \(1.1485\times10^{-2}\),
\(1.3303\times10^{-2}\), and \(2.0525\times10^{-2}\) in the three norms,
respectively. Thus, the numerical experiments do not indicate a
deterioration of the temporal error constants under the spatial refinements
considered.

The mesh-refinement convergence plots for the Strehmel--Weiner scheme show
qualitatively similar behavior and are therefore omitted for brevity.

\section{Conclusion}\label{sec:conclusion}

In this paper, we extended the order-reduction analysis of \cite{orig} from
third- to fourth-order explicit exponential Runge--Kutta methods for
$u'+Au=Bu$ with non-commuting, unbounded $A$ and $B$. We explicitly derived and solved the defect recursions for a general four-stage scheme, identifying all error terms specific to fourth-order methods. We then proved tight bounds for these terms and confirmed our findings through numerical investigation of an advection–diffusion problem. For this class of problems, a fourth-order exponential
Runge--Kutta method\cite{orig} once $A$ and $B$ fail to commute: both are capped, in our analysis and in the
experiment, at order $11/4$. 

Two main questions remain open: first, whether $11/4$ is a strict upper bound for \emph{every} explicit exponential Runge--Kutta method of order $\ge 3$ applied to non-commuting $A$ and $B$ of this type, or merely a property of the specific order conditions studied here and in \cite{orig}. Because fourth-order methods can satisfy the order conditions in different ways, their resulting convergence orders may differ from those presented here. Another open issue is the necessity of adding an extra stage to avoid order reduction, even under stronger regularity assumptions on $B$. Resolving these questions and designing a new method that preserves full fourth-order convergence will be the focus of future work.
	
\subsection*{Funding} 
Thi Tam Dang has been supported by the Jane and Aatos Erkko Foundation. Pablo Alexei Gazca-Orozco and Trung Hau Hoang gratefully acknowledge support from the Charles University Research program no.\ PRIMUS/25/SCI/025 and UNCE/24/SCI/005. 

\subsection*{Conflict of interest}
The authors declare that there are no conflicts of interest.

\subsection*{Data availability}
No data was used for the research described in the article.


\begin{thebibliography}{99}
	
	\bibitem{orig} 
	T.-H. Hoang. 
	Order reduction of exponential Runge--Kutta methods: non-commuting operators. To appear 
	\emph{Adv. Appl. Math. Mech.}, 2026. 
	
	\bibitem{CoxMatthews2002}
	S.~M. Cox and P.~C. Matthews.
	Exponential time differencing for stiff systems.
	\emph{J. Comput. Phys.}, 176(2):430--455, 2002.
	
	\bibitem{EinkemmerOstermann2014}
	L.~Einkemmer and A.~Ostermann.
	Overcoming order reduction of exponential Runge--Kutta methods for advection--diffusion--reaction equations.
	\emph{J. Comput. Appl. Math.}, 272:78--92, 2014.
	
	\bibitem{GonzalezOstermannThalhammer2006}
	C.~Gonzalez, A.~Ostermann, and M.~Thalhammer.
	A note on error estimates for exponential Runge--Kutta methods for non-autonomous linear problems.
	\emph{BIT Numer. Math.}, 46(3):513--521, 2006.
	
	\bibitem{HO2005} 
	M. Hochbruck and A. Ostermann. 
	Explicit exponential Runge--Kutta methods for semilinear parabolic problems. 
	\emph{SIAM J. Numer. Anal.}, 43(3):1069--1090, 2005.

\bibitem{pazy1983semigroups}
A.~Pazy,
\emph{Semigroups of Linear Operators and Applications to Partial Differential Equations},
Applied Mathematical Sciences, 
Springer, New York, 1983.

	\bibitem{HO2010} 
	M. Hochbruck and A. Ostermann. 
	Exponential integrators. 
	\emph{Acta Numerica}, 19:209--286, 2010.
	
	\bibitem{HochbruckOstermannSchweitzer2009}
	M.~Hochbruck, A.~Ostermann, and J.~Schweitzer.
	Exponential Rosenbrock-type methods.
	\emph{SIAM J. Numer. Anal.}, 47(1):785--803, 2009.

\bibitem{henry1981geometric}
D.~Henry, \emph{Geometric Theory of Semilinear Parabolic Equations}, Lecture Notes in Mathematics, Springer-Verlag, 1981.

\bibitem{HOCHBRUCK2005323}
M.~Hochbruck and A.~Ostermann. {Exponential {R}unge–{K}utta methods for parabolic problems}. Appl. Numer. Math., 53(2):323--339, 2005.

	\bibitem{Ablowitz1979}
M.~J. Ablowitz and A.~Zeppetella.
\newblock Explicit solutions of Fisher's equation for a special wave speed.
\newblock {\em Bulletin of Mathematical Biology}, 41(6):835--840, 1979.

\bibitem{grindrod1996theory}
P.~Grindrod.
\newblock {\em The Theory and Applications of Reaction-Diffusion Equations: Patterns and Waves}.
\newblock Clarendon Press, Oxford, 1996.

\bibitem{bailyn1994survey}
M.~Bailyn.
\newblock {\em A Survey of Thermodynamics}.
\newblock American Institute of Physics, 1994.

\bibitem{bergman2011fundamentals}
T.~L. Bergman.
\newblock {\em Fundamentals of Heat and Mass Transfer}.
\newblock Wiley, 2011.


\bibitem{Hau1}
T.~T.~Dang and T.-H.~Hoang, {How to avoid order reduction in third-order exponential {R}unge--{K}utta methods for problems with non-commutative operators?} ArXiv preprint (arXiv:2412.11920), 2024.

\bibitem{Hau2}
L.~Einkemmer, T.-H.~Hoang, and A.~Ostermann, {Should exponential integrators be used for advection-dominated problems?} \emph{Adv. Appl. Math. Mech.}, 2026.

\bibitem{ALLEN19791085}
S.~M. Allen and J.~W. Cahn.
\newblock A microscopic theory for antiphase boundary motion and its application to antiphase domain coarsening.
\newblock {\em Acta Metallurgica}, 27(6):1085--1095, 1979.

\bibitem{Bronsard1993}
L.~Bronsard and F.~Reitich.
\newblock On three-phase boundary motion and the singular limit of a vector-valued {G}inzburg–{L}andau equation.
\newblock {\em Archive for Rational Mechanics and Analysis}, 124(4):355--379, 1993.

	\bibitem{Krogstad} 
	S. Krogstad. 
	Generalized integrating factor methods for stiff PDEs.
	\emph{J. Comput. Phys.}, 203(1):72--88, 2005.
	
	\bibitem{Lawson1967}
	J.~D. Lawson.
	Generalized Runge-Kutta processes for solving $y'= f(x, y)$.
	\emph{SIAM J. Numer. Anal.}, 4(3):372--380, 1967.
	
	\bibitem{Hau3}
	T.~T.~Dang and T.-H.~Hoang, {From memory model to CPU time: exponential integrators for advection-dominated problems}. ArXiv preprint arXiv:2512.03679, 2025. 
	
	\bibitem{LuanOstermann2013}
	V.~T. Luan and A.~Ostermann.
	Explicit exponential Runge--Kutta methods of high order for parabolic problems.
	\emph{J. Comput. Appl. Math.}, 256:168--179, 2013.
	
	\bibitem{LuanOstermann2014}
	V.~T. Luan and A.~Ostermann.
	Stiff order conditions for exponential Runge--Kutta methods of order five.
	\emph{J. Comput. Appl. Math.}, 268:139--152, 2014.
	
	\bibitem{MinchevWright2005}
	B.~Minchev and W.~M. Wright.
	\emph{A review of exponential integrators for first order semi-linear problems}.
	Technical Report 2/05, Department of Mathematics, NTNU, Norway, 2005.
	
	\bibitem{OstermannRoche1992}
	A.~Ostermann and M.~Roche.
	Order reduction of Runge--Kutta methods for time-dependent boundary conditions.
	\emph{BIT Numer. Math.}, 32(4):685--699, 1992.
	
	\bibitem{Pazy} 
	A. Pazy. 
	\emph{Semigroups of Linear Operators and Applications to Partial Differential Equations}. 
	Springer-Verlag, New York, 1983.

\bibitem{CaliariOstermann2009}
M.~Caliari and A.~Ostermann.
Implementation of exponential Rosenbrock-type methods.
\emph{Appl. Numer. Math.}, 59(3-4):568--581, 2009.

\bibitem{HochbruckOstermann2005_parabolic}
M.~Hochbruck and A.~Ostermann.
Exponential Runge--Kutta methods for parabolic problems.
\emph{Appl. Numer. Math.}, 53(2-4):323--339, 2005.

\bibitem{doi:10.1137/100788860}
A. H. Al-Mohy and N. J. Higham.
Computing the action of the matrix exponential, with an application to exponential integrators.
\textit{SIAM Journal on Scientific Computing} 33, 488--511, 2011.

\bibitem{RainwaterTokman2014}
G.~Rainwater and M.~Tokman.
A new class of exponential integrators for stiff systems.
\emph{J. Comput. Phys.}, 269:285--303, 2014.

\end{thebibliography}
\end{document}